\documentclass[12pt]{article}

\usepackage{graphicx}
\usepackage{amsfonts,amsmath,amssymb,amsthm}
\usepackage{newtxtext}
\usepackage{xcolor,soul}
\usepackage{fullpage,comment}
\usepackage{hyperref}

\newtheorem{theorem}{Theorem}
\newtheorem{corollary}[theorem]{Corollary}
\newtheorem{proposition}[theorem]{Proposition}
\newtheorem{lemma}[theorem]{Lemma}
\newtheorem{conjecture}[theorem]{Conjecture}

\newtheorem{remark}{Remark}

\newcommand{\inj}{\mathrm{inj}}

\newcommand{\Fq}{\mathbb{F}_q}

\title{Combinatorial Bounds for Codes over Metric Spaces: Ramsey-Sidorenko Thresholds and Subgraph Counts}

\author{
 Lucas Waite $^{1}$ \quad Nuh Aydin$^{1,}\footnote{Corresponding author: aydinn@kenyon.edu}$\\[1em]  
{\small $^{1}$Department of Mathematics and Statistics, Kenyon College, Gambier, OH 43022, USA}\\
{\small Emails: waite1@kenyon.edu, aydinn@kenyon.edu}\\
}

\date{July 28, 2026}

\begin{document}

\maketitle

\begin{abstract}
This paper investigates the relationship between coding theory and extremal combinatorics by representing codes in general metric spaces as independent sets in proximity graphs. We provide a generalized framework for the Gilbert-Varshamov (GV) bound applicable to codes over any finite metric space and explore the conditions under which global combinatorial parameters can force the existence of codes exceeding this bound. Central to our analysis is the introduction of Ramsey-Sidorenko and independence-forcing graphs. We establish density thresholds for various graph families and utilize the Karush--Kuhn--Tucker conditions to analyze entropy optimization in the Hamming case. Furthermore, we derive upper bounds on code sizes using fractional packings in vertex-transitive and nonedge-transitive graphs. Our findings demonstrate that local subgraph statistics alone are insufficient to surpass the GV bound in the Hamming case, suggesting that improvements must stem from large-scale structural properties of the space.
\end{abstract}

\section{Introduction}
A code $C$ of length $n$ over  $\Fq$ is a subset of $\Fq^n$.  The elements of $C$ are called codewords, the set $\Fq$ is called the (code) alphabet. The most important alphabet in coding theory is $\mathbb{F}_2$. A code over  $\mathbb{F}_2$ is called a binary code, a code over  $\mathbb{F}_3$ is called a ternary code, and a code over $\Fq$ is called a $q$-ary code. The Hamming distance $d_H(\mathbf{u}, \mathbf{v})$ between two vectors $\mathbf{u}=(u_1,u_2,\dots,u_n)$ and  $\mathbf{v}=(v_1,v_2,\dots,v_n)$ in $\Fq^n$ is the number positions in which they differ, i.e., $d_H(\mathbf{u}, \mathbf{v})=|\{i: u_i\not = v_i \}|$. The minimum (Hamming) distance $d$ of $C$ is $d=\min \{d_H(\mathbf{u}, \mathbf{v}): \mathbf{u}, \mathbf{v} \in C, \mathbf{u}\not = \mathbf{v} \}$.   A code of length $n$, cardinality $M$, and minimum distance $d$ is denoted by $(n,M,d)$.  When $C$  is a vector subspace of $\Fq^n$, then it is a linear code, and the notation $[n,k,d]_q$ where $k$ is the dimension of $C$, is used the denote the basic parameters of $C$.  Most of the coding literature is on linear codes but we do not require a code to be linear in this work. 

One of the most important and challenging problems in coding theory is a discrete optimization problem: Given $q,n$ and $d$, what is the largest possible value of $|C|$ for a code $C$ of length $n$ over $\Fq$?  We denote this value by $A_q(n,d)$. A code  $C$ with  $|C|=A_q(n,d)$ is called an optimal code. In most cases, optimal codes are not known. In the case of linear codes, the online database \cite{codetables} provides information on best known linear codes (BKLC), including theoretical bounds and information about the constructions of the codes with currently best known parameters.

There are numerous bounds on the parameters of codes, some apply to all codes some only to linear codes. One of the most fundamental bounds is the Gilbert-Varshamov (GV) bound   which states that $\displaystyle{A_q(n,d)\geq \frac{q^n}{V(n,d-1)} \text{ where }  V(n,t)=\sum_{i=0}^t \binom{n}{i}(q-1)^i}$. A code that satisfies the inequality in the GV bound is called a GV code. An improvement of  the Gilbert-Varshamov bound for binary codes  was given in \cite{jiang2004}:

\begin{theorem}[Jiang, Vardy \cite{jiang2004}]
\label{thm: Jiang}
    Let $n$ and $d$ be positive integers with $d/n\leq0.499$. Then there exists a positive constant $c$ such that $$A_2(n,d)\geq c\frac{2^n}{V(n,d-1)}\log_2 V(n,d-1).$$
\end{theorem}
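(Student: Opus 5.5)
The plan is the classical graph-theoretic argument of Jiang and Vardy. Codes of minimum distance $d$ are exactly the independent sets in the Gilbert graph $G_{n,d}$. This graph has vertex set $\mathbb{F}_2^n$, and two vectors are adjacent when $1\le d_H(\mathbf{u},\mathbf{v})\le d-1$. Then $A_2(n,d)=\alpha(G_{n,d})$.

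The graph is $D$-regular with $D=V(n,d-1)-1$ on $N=2^n$ vertices. The greedy bound $\alpha\ge N/(D+1)$ is exactly the GV bound. To gain the logarithmic factor, I would invoke the Ajtai--Koml\'os--Szemer\'edi theorem in the form proved by Bollob\'as (or Alon's variant). It says that if a graph on $N$ vertices has maximum degree at most $D$ and at most $T$ triangles, then
\[
\alpha(G)\ge c\,\frac{N}{D}\Bigl(\log_2 D-\tfrac12\log_2 (T/N)\Bigr).
\]
It therefore suffices to show $T/N\le D^{2-\varepsilon}$ for some fixed $\varepsilon>0$ depending only on the ratio $d/n\le 0.499$. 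The bracket is then at least $(\varepsilon/2)\log_2 D$, and since $D\approx V(n,d-1)$ this gives the stated bound.

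The key step is counting triangles. By vertex-transitivity, $T/N$ equals $\tfrac13$ times the number of edges inside the neighbourhood of $\mathbf{0}$. That number is the count $B$ of pairs $(\mathbf{x},\mathbf{y})$ with $\mathrm{wt}(\mathbf{x}),\mathrm{wt}(\mathbf{y}),d_H(\mathbf{x},\mathbf{y})\le d-1$. Write $\mathbf{x}$ with weight $i$, and let $\mathbf{y}$ have $j$ ones inside $\mathrm{supp}(\mathbf{x})$ and $k$ outside. Then
\[
B=\sum \binom{n}{i}\binom{i}{j}\binom{n-i}{k},
\]
summed over $i,\ j+k,\ i-j+k\le d-1$. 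I would estimate $B$ by its largest term, at the cost of a polynomial factor $n^3$, and pass to entropy exponents with $\delta=d/n$. This gives $\log_2 D\approx nH(\delta)$. The claim to check is that
\[
\max_{\text{constraints}}\Bigl[H(a)+aH(b/a)+(1-a)H\bigl(c/(1-a)\bigr)\Bigr]<2H(\delta)-\eta
\]
for some $\eta>0$.

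I expect this optimization to be the main obstacle. The unconstrained maximum equals $2H(\delta)$ and is attained at $a=\delta$, $b=\delta^2$, $c=\delta(1-\delta)$. There the distance $a-b+c=2\delta(1-\delta)$ exceeds $\delta$ whenever $\delta<1/2$, so the constraint is violated. Hence the constrained optimum is strictly smaller, and by continuity and compactness the gap $\eta$ is bounded away from $0$ uniformly for $\delta\le 0.499$. I would make this rigorous with Lagrange/KKT conditions, or more simply by noting concavity and that the feasible region excludes the unconstrained maximizer by a margin depending on $0.5-\delta$.

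Two cases remain. For small $d$ (say $d$ bounded), the claim is trivial: $\log V$ is then $O(\log n)$, and the direct triangle bound $B\le D\cdot V(n,d-1)^{1-\varepsilon}$ still holds. Alternatively, one can adjust the constant $c$ when $D$ is bounded. Combining the triangle bound with the AKS/Bollob\'as theorem then yields the statement.
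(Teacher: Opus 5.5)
The paper gives no proof of this statement; it quotes it from Jiang and Vardy. Your outline (Gilbert graph, Bollob\'as's local-sparsity form of Ajtai--Koml\'os--Szemer\'edi, triangles through $\mathbf 0$, entropy optimization) is essentially their route, and for each fixed $\delta$ your strict-concavity argument is fine. The gap is \emph{uniformity}.

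The content of the theorem is that one $c$ works for all $n,d$ with $d/n\le 0.499$. If $c$ may depend on $(n,d)$, the statement is vacuous. So you need $\log_2(T/N)\le(2-\varepsilon)\log_2 D$ with a single $\varepsilon$, i.e.\ a \emph{relative} saving $\Psi(\delta)\le(2-\varepsilon)H(\delta)$, where $\Psi(\delta)$ is your constrained maximum. Your claim that the gap $\eta$ is bounded away from $0$ for all $\delta\le 0.499$ ``by continuity and compactness'' is false. The interval $(0,0.499]$ is not compact, and $\eta\le 2H(\delta)\to 0$. Also, the margin $2\delta(1-\delta)-\delta=\delta(1-2\delta)$ by which the product maximizer is excluded degenerates as $\delta\to 0$, not only as $\delta\to\frac12$. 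Compactness gives only $\sup_{[\delta_0,0.499]}\Psi/H<2$.

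Near $0$ you need an actual computation. For instance, adding $j+k\le d-1$ and $i-j+k\le d-1$ gives $2k\le 2(d-1)-i$. Hence $B\le\sum_i\binom ni 2^i V(n,d-1-\lceil i/2\rceil)$, whose exponent is at most $H(\delta)+\delta+H(\delta/2)=(\frac32+o(1))H(\delta)$. Without this, regimes such as $d=\lfloor\sqrt n\rfloor$ fall under neither of your cases.

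Second, the polynomial losses are negligible against a relative saving only when $\log_2 D\ge C\log_2 n$. These losses are the factor $n^3$ from taking the largest term, and the $O(\log n)$ in $\log_2 D\ge nH(\delta)-O(\log n)$. The condition $\log_2 D\ge C\log_2 n$ is guaranteed only once $d$ exceeds a suitable constant $d_0$.

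For $d<d_0$ your justification is wrong. Here $\log_2 V\approx (d-1)\log_2 n$ is unbounded, so the logarithmic factor cannot be absorbed into $c$. The bound $B\le D\,V^{1-\varepsilon}$ you then invoke is exactly what must be proved. It does hold, by direct counting:
\begin{itemize}
\item $i\le d-1$ and $i+2k\le 2(d-1)$ give $i+k\le\frac32(d-1)$, so $B=O_{d_0}(n^{\lfloor 3(d-1)/2\rfloor})$;
\item $D\ge\binom n{d-1}=\Omega_{d_0}(n^{d-1})$;
\item hence $\log_2(T/N)\le\frac32\log_2 D+O_{d_0}(1)$, and the finitely many remaining small pairs $(n,d)$ are absorbed into $c$.
\end{itemize}

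As written, your argument yields the bound only with $c=c(\delta_0)$ for $\delta_0\le d/n\le 0.499$. That is the fixed-rate corollary, not the uniform statement.
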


For small or specific prime-order alphabets the question of whether or not there exist codes of exponentially larger order than given by the Gilbert-Varshamov bound remains open and has received much attention. Interestingly, such families of codes have been found for sufficiently large, non prime-order alphabets. These constructions, called TVZ codes \cite{tvz1982},  rely on the underlying algebraic structure of large finite fields.

The goals of this work are twofold: the first is to consider the possible strength of local combinatorial statistics in finding bounds on the parameters of optimal codes. We are interested in whether or not it is possible to force exponentially larger-than-GV codes via global combinatorial parameters such as $H$-subgraph count, rather than algebraic structure. The hope is that this will illuminate what approaches will be ineffective in considering the optimality of the Gilbert-Varshamov bound for finite fields in which this problem remains open.
Secondly we want to outline a general structure for the study of codes over general metric spaces. The notion of a code over a metric space has seen multiple generalizations over time, and we are interested in the most general form. Thus we will also consider how combinatorial bounds can apply in the case of a general metric space.

We expand on standard asymptotic notation in order to consider the exponential gap: for sets of functions $A$ and $B$, a particular function $g$, and a function operation $\ast$, we  write $f\in g\ast A$ if there exists some $h\in A$ such that $f=g\ast h$, and we write $f\in A\ast B$ if there exists some $h\in A$ and $h'\in B$ such that $f=h\ast h'$. We use the notation $\mathbb R_{\geq0}$ for the set of nonnegative real-valued functions.

We mostly work in a graph-theoretic setting, as the question of finding optimal codes has a natural rephrasing in terms of finding independent sets in a specific graph.   All graphs considered in this paper are simple and finite. Given a graph $G$ with the vertex set $V(G)$ and the edge set $E(G)$, and any two vertices $u,v \in V(G)$, we write $u\sim v$  to mean that $u$ and $v$ are connected by an edge, i.e., they are incident, and $\{ u,v\} \in E(G)$. We also write $e=uv\in E(G)$. The number $|V(G)|$ of vertices of $G$ is called the order of $G$. We let $\Delta(G)$ and $\overline d(G)$ denote the maximum degree and average degree of a vertex in $G$ respectively. A set $W\subseteq V(G)$ is \textit{independent} if no edges lie between vertices in $W$. The \textit{independence} of $G$, $\alpha(G)$, is the order of the largest independent set in $G$. We call a pair of vertices $\{u,v\}\not\in E(G)$ a \textit{nonedge} for convenience. For graphs $H_1$ and $H_2$, we  define their \textit{join}, $H_1\vee H_2$ as the graph with vertices $V(H_1)\cup V(H_2)$ and edges $E(H_1)\cup E(H_2)\cup (V(H_1)\times V(H_2))$, and their \textit{disjoin union}, $H_1\sqcup H_2$ as the graph with vertices $V(H_1)\cup V(H_2)$ and edges $E(H_1)\cup E(H_2)$.

In this work, we consider a more general notion of a code in which the alphabet need not be a finite field or a ring.  We only need a space with a notion of distance to ask the question of interest. Thus we ask: for a metric space $M$, what is the largest subset $C$ of $M$ in which every pair in $C$ is at least $d$ apart? To this end, for a finite metric space $M$, we define $C\subseteq M$ to be a \textit{code of distance $d$} if $d(u,v)\geq d$ for all $u\not=v\in C$. We call elements of $C$ codewords. We  define \textit{$A(M,d)$} to be the size of the largest code of distance $d$ in $M$. A code $C$ of distance $d$ with $|C|=A(M,d)$ is called an \textit{optimal code}. Other notation will be introduced as needed throughout the paper.

\begin{remark}
\label{rmk: d}
In coding theory literature, it is standard to use $d$ to denote the minimum distance of a code. It is also standard in the context of metric spaces to use $d$ to refer to the underlying distance function. Moreover, in the context of graph theory, it is common to use $d$ to denote the average degree of a vertex. Therefore, the letter $d$ will have different meanings in different parts of this paper, reflecting its overloaded meanings in these fields.
\end{remark}


\noindent We recall Tur\'an's theorem in its standard form:

\begin{theorem}[Turán's Theorem {\cite{turan1941}}]
    Let $r\geq 1$. If $G$ is a $K_{r+1}$-free graph on $n$ vertices, then
\[
e(G)\leq e(T_r(n)),
\]
where $T_r(n)$ is the complete $r$-partite graph whose part sizes differ by at most one. Moreover, equality holds if and only if $G=T_r(n)$.
\end{theorem}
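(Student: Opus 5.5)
Turán's theorem is cited here as a classical result; the plan below is the standard Zykov symmetrization argument, which gives both the extremal bound and the uniqueness of the extremal graph. Throughout, let $G$ be a $K_{r+1}$-free graph on $n$ vertices with the maximum possible number of edges. The goal is to show that $G$ is complete multipartite with at most $r$ parts, and then to optimize the part sizes.

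\textbf{Step 1 (non-adjacency is transitive).} We show that non-adjacency is an equivalence relation on $V(G)$. Suppose instead that $u\not\sim v$ and $v\not\sim w$ but $u\sim w$.
\begin{itemize}
\item If $\deg v<\deg u$, delete $v$ and add a twin copy $u'$ of $u$, adjacent exactly to $N(u)$. A twin adds no clique larger than one through $u$, so the new graph is still $K_{r+1}$-free. It has $e(G)-\deg v+\deg u>e(G)$ edges, contradicting maximality. The case $\deg v<\deg w$ is symmetric.
\item If $\deg v\ge \deg u$ and $\deg v\ge \deg w$, delete $u$ and $w$ and add two twins of $v$. The graph stays $K_{r+1}$-free, since $v$, its twins and $N(v)$ contain no larger clique. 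The edge change is $2\deg v-(\deg u+\deg w-1)\ge 1$, where the $-1$ accounts for the edge $uw$ being counted twice. This again contradicts maximality.
\end{itemize}
So the equivalence classes of non-adjacency are independent sets, and any two vertices in different classes are adjacent. Hence $G$ is complete multipartite. Picking one vertex from each class gives a clique, so there are at most $r$ classes.

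\textbf{Step 2 (balancing).} Among complete $k$-partite graphs with $k\le r$, we maximize the edge count.
\begin{itemize}
\item If $k<r$ and some part has at least two vertices, splitting that part strictly increases the number of edges. We may therefore assume $k=r$ when $n\ge r$. If $n<r$, we get $K_n=T_r(n)$ with empty parts allowed.
\item If two parts have sizes $a\ge b+2$, moving one vertex from the larger to the smaller part changes the edge count by $(a-1)-b>0$.
\end{itemize}
So a maximizer has part sizes differing by at most one, which determines it up to isomorphism as $T_r(n)$. This gives $e(G)\le e(T_r(n))$ for every $K_{r+1}$-free $G$.

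\textbf{Step 3 (equality).} If $e(G)=e(T_r(n))$, then $G$ is itself a maximizer, and Steps 1 and 2 apply to it directly. Both steps show that every maximizer is complete multipartite and balanced, since any failure would yield a strictly larger $K_{r+1}$-free graph. Hence $G\cong T_r(n)$.

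\textbf{Main obstacle.} The delicate point is the second bullet of Step 1: one must correctly account for the edge $uw$ counted in both degrees, and check that duplicating a vertex preserves $K_{r+1}$-freeness. Both are routine once stated carefully.
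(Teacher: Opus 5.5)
The paper does not prove this statement: it is quoted from Tur\'an's 1941 paper and used only through the complementary ``Reformulated Tur\'an's Theorem'' $\alpha(G)\ge n/(\overline d+1)$. Your Zykov symmetrization argument is correct and complete. The points that need to be explicit are the ones you flag:
in the first bullet of Step 1, the twin $u'$ has degree exactly $\deg u$ because $v\notin N(u)$;
in the second bullet, the two twins keep degree $\deg v$ because $u,w\notin N(v)$;
in both cases a twin is non-adjacent to its original, so swapping the twin back for the original turns any clique through it into a clique of the same size in $G$. Your phrase ``adds no clique larger than one'' should say exactly this.

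Tur\'an's original proof is an induction: delete a copy of $K_r$ and bound the edges to it. Compared with that, symmetrization has the advantage you use in Step 3: every maximizer is forced to be complete multipartite and balanced, so uniqueness costs nothing extra. That uniqueness clause is what the paper needs later. Take $G$ with $\alpha=\alpha(G)$; its complement $G^c$ is $K_{\alpha+1}$-free, so your theorem gives $\frac{n(n-1-\overline d)}{2}=e(G^c)\le e(T_\alpha(n))\le\bigl(1-\frac1\alpha\bigr)\frac{n^2}{2}$. This rearranges to $\alpha\ge n/(\overline d+1)$. Equality forces $\alpha\mid n$ and $G^c\cong T_\alpha(n)$, i.e.\ $G$ is a disjoint union of $\alpha$ cliques of order $n/\alpha$. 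That is the equality case the paper invokes to characterize equality in its generalized Gilbert--Varshamov bound. If you wanted only the inequality $\alpha(G)\ge n/(\overline d+1)$, the Caro--Wei bound $\alpha(G)\ge\sum_v\frac{1}{\deg v+1}$ followed by Jensen is shorter. Your route has the merit of proving the edge-extremal statement exactly as stated, including identification of the extremal graph.
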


\noindent The following equivalent formulation will be applied multiple times throughout the paper.

\begin{theorem}[Reformulated Tur\'an’s Theorem]
    \label{thm: turan}
    Suppose $G$ is a graph with $n$ vertices and average degree $\overline d$. Then 
    \[
    \alpha(G)\geq\frac{n}{\overline{d}+1}.
    \]
    Equality holds if and only if $G$ is a disjoint union of cliques of equal order.
\end{theorem}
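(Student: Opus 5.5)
I would derive the bound from Tur\'an's theorem applied to the complement graph, and then check when equality can hold. Let $G$ have $n$ vertices, $m=e(G)=n\overline d/2$ edges, and set $\alpha=\alpha(G)$. The complement $\overline G$ has clique number $\omega(\overline G)=\alpha$, so it is $K_{\alpha+1}$-free. Tur\'an's theorem with $r=\alpha$ then gives
\[
e(\overline G)\le e(T_\alpha(n)).
\]

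The next step is to turn this into a lower bound on $m$. The complement of $T_\alpha(n)$ is a disjoint union of $\alpha$ cliques whose sizes $n_1,\dots,n_\alpha$ differ by at most one. Hence
\[
m=\binom n2-e(\overline G)\ \ge\ \sum_{i=1}^{\alpha}\binom{n_i}{2}.
\]
Since $x\mapsto\binom x2=x(x-1)/2$ is convex and $\sum n_i=n$, Jensen's inequality gives
\[
\sum_i\binom{n_i}{2}\ \ge\ \alpha\binom{n/\alpha}{2}=\frac{n}{2}\Bigl(\frac n\alpha-1\Bigr).
\]
Combining this with $m=n\overline d/2$ yields $\overline d\ge n/\alpha-1$, which rearranges to $\alpha\ge n/(\overline d+1)$.

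For the equality case, the reverse direction is immediate. If $G$ is a disjoint union of $k$ cliques, each of order $s$, then $n=ks$, $\overline d=s-1$ and $\alpha=k=n/(\overline d+1)$.

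The forward direction is the main obstacle. Suppose $\alpha=n/(\overline d+1)$. Then both inequalities in the chain are tight.
\begin{itemize}
\item Tightness of Jensen forces all $n_i$ to equal $n/\alpha$. In particular $\alpha$ divides $n$, and there is no rounding slack.
\item Tightness of Tur\'an gives $e(\overline G)=e(T_\alpha(n))$. By the uniqueness clause of Tur\'an's theorem, $\overline G=T_\alpha(n)$.
\end{itemize}
Therefore $G$ is the complement of a balanced complete $\alpha$-partite graph, that is, a disjoint union of $\alpha$ equal cliques.

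The one point to handle carefully is the case where $n/\alpha$ is not an integer. There the balanced part sizes $n_i$ differ by one, so Jensen is strict, and equality in the stated bound is impossible. This is consistent with the claim, since equal cliques force $\alpha\mid n$.
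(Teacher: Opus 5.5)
Your proof is correct and is essentially the paper's route: the paper states this result without a written proof, presenting it as the complement form of Tur\'an's theorem. You have spelled out that derivation: Tur\'an applied to $\overline G$, convexity of $x\mapsto\binom{x}{2}$ for the bound, and Tur\'an's uniqueness clause together with strict convexity for the equality case, with the case $\alpha\nmid n$ handled correctly.
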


\section{Basic Results}

For any metric space $M$ with distance function $d$, we define the \textit{$d$-Proximity graph of $M$}, $\Gamma_{<d}(M)$ by $V(\Gamma_{<d}(M))=M$ and $u\sim v$ if and only if $d(u,v)<d$. For any $v\in M$, let $B_d(v)$ denote the open ball of radius $d$ around $v$. We  let $|B_d|=\frac{1}{|M|}\sum_{v}|B_d(v)|$. We observe the following two facts about proximity graphs:
\begin{remark}
\label{rmk: ind-code}
    For any finite metric space $M$, 
    \[
    \alpha(\Gamma_{<d}(M))=A(M,d).
    \]
\end{remark}

\begin{remark}
\label{rmk: degree-ball}
    For any finite metric space $M$ and for any $v\in M$, 
    \[
    \deg_{\Gamma_{<d}(M)}(v)=|B_d(v)|-1.
    \]
\end{remark}

\noindent Now, the classical GV lower bound can be stated in this  general setting as follows:
\begin{theorem}[Gilbert-Varshamov]
\label{thm: GV}
    For any finite metric space $M$, 
    \[
    A(M,d)\geq\frac{|M|}{|B_d|}.
    \]
\end{theorem}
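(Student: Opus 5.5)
The plan is to combine Remark~\ref{rmk: ind-code}, Remark~\ref{rmk: degree-ball}, and the Reformulated Tur\'an Theorem (Theorem~\ref{thm: turan}) applied to the proximity graph $G=\Gamma_{<d}(M)$. There is no real obstacle; the only point needing care is that $|B_d|$ is an \emph{average} ball size rather than a uniform one. This is exactly why the average-degree form of Tur\'an's theorem is the right tool, rather than the greedy maximum-degree argument that underlies the classical GV bound.

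First, by Remark~\ref{rmk: ind-code} we have $A(M,d)=\alpha(G)$. It therefore suffices to bound $\alpha(G)$ from below. Next, compute the average degree of $G$ using Remark~\ref{rmk: degree-ball}:
\[
\overline d(G)=\frac{1}{|M|}\sum_{v\in M}\deg_G(v)=\frac{1}{|M|}\sum_{v\in M}\bigl(|B_d(v)|-1\bigr)=|B_d|-1.
\]
Here $|V(G)|=|M|$, and the simplicity of $G$ is automatic: $d(v,v)=0<d$ puts $v$ in its own ball, but the proximity graph has no loops, which is exactly the $-1$ in Remark~\ref{rmk: degree-ball}.

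Finally, Theorem~\ref{thm: turan} gives
\[
\alpha(G)\geq \frac{|M|}{\overline d(G)+1}=\frac{|M|}{|B_d|},
\]
which combined with the first step yields $A(M,d)\geq |M|/|B_d|$.

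One small point to check is that $d>0$. This guarantees that each ball contains its center, so $|B_d|\geq 1$ and the bound is meaningful. As a remark, when $M=\Fq^n$ with the Hamming metric, every ball has size $V(n,d-1)$, and this recovers the classical statement.
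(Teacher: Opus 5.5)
Your proposal is correct and follows the paper's argument: you identify $A(M,d)$ with $\alpha(\Gamma_{<d}(M))$, use Remark~\ref{rmk: degree-ball} to get average degree $|B_d|-1$, and apply the average-degree form of Tur\'an's theorem. Your observation that $d>0$ is needed, so that each ball contains its center and $|B_d|\geq 1$, is a sensible implicit hypothesis that the paper leaves unstated.
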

\begin{proof}
This bound follows as an application of Tur\'an's theorem and remarks \ref{rmk: ind-code} and \ref{rmk: degree-ball}. 
\end{proof}

We call this theorem the generalized Gilbert-Varshamov bound in the more general setting of metric space codes. The original bound is due to Edgar Gilbert \cite{gilbert1952} and Rom Varshamov \cite{varshamov1957} who independently discovered it for the case where $M$ is $\Fq^n$ under the Hamming metric.

\begin{remark}
\label{rmk: GV char}
    Equality holds in the Gilbert-Varshamov bound exactly when $\Gamma_{<d}(M)$ is a degree-regular disjoint union of cliques.
\end{remark}
This follows as a direct corollary of the characterization of equivalence in Tur\'an's Theorem.

\section{Independence-Forcing Subgraph Counts}

Observing that codes in $M$ correspond to independent sets in the proximity graph of $M$ naturally leads us to consider what properties of a graph might force large independence. We thus consider the following Ramsey-Theoretic question: for a graph $G$ with few copies of $H$, what lower bounds on $\alpha(G)$ do we have? This question has received comparatively less attention than the well-know Graph-Ramsey numbers.

For two graphs $H$ and $G$, a \textit{homomorphism} from $H$ to $G$ is a function $g:H\to G$ such that if $\{u,v\}\in E(H)$ then $\{g(u),g(v)\}\in E(G)$. We  let $\hom(H,G)$ and $\inj(H,G)$ be the number of homomorphisms and injective homomorphisms from $H$ to $G$ respectively. We propose a plausible form for homomorphism-multiplicity Ramsey numbers.  This conjecture is informed by the results in this section.
\begin{conjecture}
\label{conj: Ramsey-Sidorenko}
    Given a graph $H$, there exist constants $a,C_H>0$ such that for any graph $G$ of order $n$,
    \[
    \alpha(G)\geq C_H\min\left\{n^{a},\left(\frac{n^{v(H)}}{\hom(H,G)}\right)^{1/e(H)}\right\}.
    \]
\end{conjecture}

We say that a graph $H$ is \textit{Ramsey-Sidorenko} if it satisfies the statement in Conjecture \ref{conj: Ramsey-Sidorenko} for all graphs $G$. In that case, we call the supremum of the constants $a(H)\leq 1$ such that $H$ satisfies the statement in Conjecture \ref{conj: Ramsey-Sidorenko} for all graphs $G$ the \textit{Ramsey-Sidorenko threshold} for $H$, denoted $a_0(H)$. The main result of this form comes from a relatively recent paper of Bohman-Mubayi.
\begin{theorem}[Bohman-Mubayi \cite{bohman2018}]
\label{thm: clique-independence}
    For each $s\geq2$, there exists some $c_s>0$ such that if a graph $G$ has order $n$,
\[
\alpha(G)\geq  c_s\min\left\{n^\frac{1}{s-1},\left(\frac{n^s}{\hom(K_s,G)}\right)^{\frac{1}{\binom{s}{2}}}\right\}.
\]
\end{theorem}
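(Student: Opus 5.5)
We prove the bound by induction on $s$, using random sampling to delete copies of $K_s$ and Tur\'an's theorem (Theorem~\ref{thm: turan}) as the base tool. Write $T_s=\hom(K_s,G)$; since homomorphisms of $K_s$ are injective, $T_s=\inj(K_s,G)=s!\cdot\#\{K_s\subseteq G\}$.

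\textbf{Base case $s=2$.} Here $T_2=2e(G)=n\overline d$. Theorem~\ref{thm: turan} gives
\[
\alpha(G)\geq \frac{n}{\overline d+1}\geq \tfrac12\min\{n,\,n^2/T_2\},
\]
which is the claim with $c_2=1/2$.

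\textbf{Inductive step.} Fix $s\geq 3$ and assume the statement for $s-1$. Set
\[
t=\left(\frac{n^s}{T_s}\right)^{1/\binom s2}.
\]
If $t\geq n^{1/(s-1)}$, we only need an independent set of size about $n^{1/(s-1)}$. Otherwise $t<n^{1/(s-1)}$.

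Sample each vertex independently with probability $p$, chosen so that the expected number of surviving $K_s$-copies, $p^sT_s/s!$, is at most half the expected number $pn$ of surviving vertices. Delete one vertex from each surviving copy. This leaves a $K_s$-free induced subgraph $G'$ on roughly $m\approx pn$ vertices, with
\[
p\approx \left(\frac{n}{T_s}\right)^{1/(s-1)}.
\]

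Inside the $K_s$-free graph $G'$ there are two routes to a large independent set.
\begin{itemize}
\item Every neighbourhood $N(v)$ is $K_{s-1}$-free. Applying the induction hypothesis inside $N(v)$, a vertex of large degree yields a large independent set.
\item If all degrees are small, Theorem~\ref{thm: turan} applies directly.
\end{itemize}
Balancing the two gives the classical Erd\H{o}s--Szekeres/Ajtai--Koml\'os--Szemer\'edi-type bound $\alpha(G')\gtrsim m^{1/(s-1)}$, up to constants; logarithmic gains are not needed. It then remains to substitute $m\approx n(n/T_s)^{1/(s-1)}$ and check that the result dominates $t$ in the regime $t<n^{1/(s-1)}$.

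\textbf{Main obstacle.} The naive deletion computation above is unlikely to give exactly the exponent $1/\binom s2$. I expect the real difficulty to be a nested sampling that controls all lower clique counts $T_j$, $j<s$, simultaneously: by Kruskal--Katona-type inequalities these relate to $T_s$ and $n$, and we would choose $p$ to equalize the $K_j$ densities at scale $t$. Concretely, I would try to pick $p$ so that after sampling the graph has $\approx t$ vertices per ``unit'' and few $K_s$. Then one of two things happens:
\begin{itemize}
\item the average degree drops to $O(1)$, and Theorem~\ref{thm: turan} yields $\Omega(pn)=\Omega(t)$; or
\item some vertex neighbourhood has many vertices but few $K_{s-1}$ relative to its size, and the induction hypothesis applies there.
\end{itemize}
Verifying that the exponents match, namely that the recursion $a_s$ satisfies $1/\binom{s}{2}$ consistently with the threshold $n^{1/(s-1)}$, is where I expect the careful bookkeeping to lie.
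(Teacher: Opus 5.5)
\textbf{There is a genuine gap.} Your base case is correct. The inductive step, however, is not a proof, and the route you describe provably falls short. After sampling and deletion you have a $K_s$-free graph on $m\approx pn=(n^s/T_s)^{1/(s-1)}$ vertices. Erd\H{o}s--Szekeres then yields only
\[
\alpha(G)\gtrsim m^{1/(s-1)}=\left(\frac{n^s}{T_s}\right)^{1/(s-1)^2},
\]
while the target is $(n^s/T_s)^{1/\binom s2}$. For $s\geq3$ we have $(s-1)^2>\binom s2$, and $n^s/T_s$ ranges up to $n^{s/2}$ in the regime $t<n^{1/(s-1)}$. So your final ``check'' fails by a polynomial factor: for $s=3$ and $T_3=n^2$ you get $n^{1/4}$ against $n^{1/3}$. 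The nested sampling you propose as a remedy is never specified, so the heart of the argument is missing. The loss comes from discarding $T_s$: once you pass to a $K_s$-free graph, all that remains is the Ramsey bound on only $m$ vertices.

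\textbf{The missing idea} is to run your first bullet in $G$ itself, with no sampling. Let $\alpha=\alpha(G)\leq c_sn^{1/(s-1)}$ with $c_s\leq 1/2$, and write $d_v=\deg(v)$.
\begin{enumerate}
\item Since $\alpha(G[N(v)])\leq\alpha$, the induction hypothesis forces the second term of the minimum for every $v$ in $L=\{v:d_v>(\alpha/c_{s-1})^{s-2}\}$. That is, $\hom(K_{s-1},G[N(v)])\geq c_{s-1}^{\binom{s-1}2}d_v^{s-1}/\alpha^{\binom{s-1}2}$ for $v\in L$.
\item Theorem~\ref{thm: turan} gives $\sum_vd_v\geq n^2/\alpha-n\geq n^2/(2\alpha)$.
\item Vertices outside $L$ contribute at most $n(\alpha/c_{s-1})^{s-2}\leq n^2/(4\alpha)$ once $c_s^{s-1}\leq c_{s-1}^{s-2}/4$. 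This is exactly where the threshold $n^{1/(s-1)}$ enters: typical degrees $n/\alpha$ must exceed $\alpha^{s-2}$.
\item Use $\hom(K_s,G)=\sum_v\hom(K_{s-1},G[N(v)])$ and the power-mean inequality with $|L|\leq n$:
\[
T_s\geq\frac{c_{s-1}^{\binom{s-1}2}}{\alpha^{\binom{s-1}2}}\sum_{v\in L}d_v^{s-1}\geq\frac{c_{s-1}^{\binom{s-1}2}}{\alpha^{\binom{s-1}2}}\cdot\frac{n^s}{4^{s-1}\alpha^{s-1}}=c'\,\frac{n^s}{\alpha^{\binom s2}}.
\]
\end{enumerate}
This gives $\alpha\geq c_s(n^s/T_s)^{1/\binom s2}$ for $c_s$ small. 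The identity $\binom{s-1}2+(s-1)=\binom s2$ is the exponent bookkeeping you were worried about.

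The paper does not prove this theorem; it is quoted from Bohman--Mubayi. But the argument above is precisely its Proposition~\ref{prop: cones independence-forcing} with one cone vertex and $H=K_{s-1}$. Indeed $\overline{K_1}\vee K_{s-1}=K_s$, and $a\mapsto a/(1+a)$ sends $1/(s-2)$ to $1/(s-1)$. To attain the exponent $1/(s-1)$ exactly, run it at the exponent supplied by the induction hypothesis rather than at some $a<a_0(H)$.
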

We deduce from this theorem that a clique of order $s$ is Ramsey-Sidorenko with threshold $a_0(K_s)\geq\frac{1}{s-1}$. A well-known conjecture of Sidorenko is relevant:
\begin{conjecture}[Sidorenko \cite{sidorenko1992}]
\label{conj: Sidorenko}
    For all bipartite graphs $H$ and all graphs $G$,
$$\hom(H,G)\geq v(G)^{v(H)}\left(\frac{2e(G)}{v(G)^2}\right)^{e(H)}.$$
\end{conjecture}
We say a bipartite graph $H$ is \textit{Sidorenko} if it satisfies this property for all graphs $G$. Various simple classes of graphs are known to be Sidorenko, such as trees, even cycles, and complete bipartite graphs. Recent results have greatly increased the class of graphs known to be Sidorenko (see \cite{conlon2018}).

\begin{remark}
    Sidorenko graphs $H$ are Ramsey-Sidorenko with $a_0(H)=1$.
\end{remark}

The above remark follows directly from Tur\'an's theorem. We can show that this property is closed under disjoint union and join. Together with the prior observation that many bipartite graphs are Ramsey-Sidorenko, these two results yield that many graphs are in fact Ramsey-Sidorenko.

\begin{proposition}
\label{prop: disconnected Ramsey-Sidorenko}
    If $H_1$ and $H_2$ are Ramsey-Sidorenko graphs, then $H_1\sqcup H_2$ is Ramsey-Sidorenko with $a_0(H_1\sqcup H_2)\leq\max\{a_0(H_1),a_0(H_2)\}$.
\end{proposition}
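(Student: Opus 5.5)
The plan rests on the multiplicativity of homomorphism counts over disjoint unions: $\hom(H_1\sqcup H_2,G)=\hom(H_1,G)\hom(H_2,G)$ for every $G$. Write $v_i=v(H_i)$, $e_i=e(H_i)$ and $x_i(G)=\bigl(n^{v_i}/\hom(H_i,G)\bigr)^{1/e_i}$. Then the quantity for $H=H_1\sqcup H_2$ is
\[
x(G)=\left(\frac{n^{v_1+v_2}}{\hom(H,G)}\right)^{1/(e_1+e_2)}=x_1(G)^{\frac{e_1}{e_1+e_2}}\,x_2(G)^{\frac{e_2}{e_1+e_2}},
\]
a weighted geometric mean of $x_1$ and $x_2$. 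Consequently
\[
\min\{x_1,x_2\}\le x\le\max\{x_1,x_2\}.
\]
The degenerate case where some $e_i=0$ is handled separately and trivially. Both statements of the proposition come from these two inequalities.

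\emph{Ramsey-Sidorenko property.} Let $a=\min\{a(H_1),a(H_2)\}$ for admissible exponents, and choose $i$ with $x_i(G)=\max\{x_1,x_2\}\ge x(G)$. The hypothesis on $H_i$ gives
\[
\alpha(G)\ge C_{H_i}\min\{n^{a(H_i)},x_i\}\ge C_{H_i}\min\{n^{a},x\}.
\]
So $H_1\sqcup H_2$ satisfies Conjecture~\ref{conj: Ramsey-Sidorenko} with constant $\min\{C_{H_1},C_{H_2}\}$.

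\emph{Upper bound on the threshold.} Fix $a>\max\{a_0(H_1),a_0(H_2)\}$ and suppose, for contradiction, that $a$ is admissible for $H$ with some constant $C$. Because $a$ exceeds both thresholds, for every $\varepsilon>0$ there are graphs $G_1,G_2$ with
\[
\alpha(G_i)<\varepsilon\min\{v(G_i)^a,x_i(G_i)\}\quad\text{for } i=1,2.
\]
Suppose a single graph $G$ failed for both $H_1$ and $H_2$ simultaneously. Then by the lower inequality,
\[
\alpha(G)<\varepsilon\min\{n^a,\min(x_1,x_2)\}\le\varepsilon\min\{n^a,x\},
\]
which contradicts the admissibility of $a$ for $H$ once $\varepsilon<C$. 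The whole task therefore reduces to producing one graph that is simultaneously a bad example for $H_1$ and for $H_2$.

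\emph{Main obstacle: the common counterexample.} I expect this construction to be the hard step. My first attempt is to combine $G_1$ and $G_2$ after blowing them up so that their orders are comparable. For the blowup I would use either the lexicographic product with a clique or a disjoint union of copies, tracking how $\alpha$ and each $x_i$ scale. Then I would take $G=G_1'\sqcup G_2'$. Along the way I need three facts:
\begin{itemize}
\item $\alpha(G)=\alpha(G_1')+\alpha(G_2')$;
\item $\hom(H_j,G)\ge\hom(H_j,G_i')$, which for connected $H_j$ follows from the disjoint-union decomposition, so each $x_j(G)$ is at most a constant factor times $x_j(G_i')$;
\item $n^a$ changes only by a constant factor.
\end{itemize}
The delicate point is that $x_j(G)$ could drop when passing to $G$ because $G$ contains many more copies of $H_j$. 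I would try to absorb this by choosing the bad examples $G_i$ near the regime $n^a\approx x_i$, where the failure is cleanest. If that fails, I would instead take the bad example for whichever $H_i$ attains the larger threshold and pad it with isolated-clique components, which leave $\alpha$ nearly unchanged while pushing the other $x_j$ to the required size.
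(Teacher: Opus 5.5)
Your first part is correct, and it is essentially the paper's own argument in cleaner form. The paper also starts from $\hom(H_1\sqcup H_2,G)=\hom(H_1,G)\hom(H_2,G)$. It then multiplies the two lower bounds $\hom(H_i,G)\ge C_{H_i}^{e(H_i)}n^{v(H_i)}/\alpha(G)^{e(H_i)}$, which hold once $\alpha(G)$ is below $n^a$. You instead observe that $x$ is a weighted geometric mean of $x_1,x_2$ and use only the larger one. Both versions need the exponent to be admissible for $H_1$ and $H_2$ simultaneously. The paper writes $a<\max\{a_0(H_1),a_0(H_2)\}$, but its multiplication step uses both bounds, so it really needs $a<\min$. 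The exponent cannot be $\max$ in general: for $K_2\sqcup K_3$, triangle-free graphs with $\alpha=O(\sqrt{n\log n})$ give $a_0\le 1/2$, although $a_0(K_2)=1$. So what you and the paper actually establish is that $H_1\sqcup H_2$ is Ramsey-Sidorenko with $a_0(H_1\sqcup H_2)\ge\min\{a_0(H_1),a_0(H_2)\}$.

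The genuine gap is your second part, the upper bound on the threshold. The reduction via $x\ge\min\{x_1,x_2\}$ is sound, but you never produce a graph that is simultaneously bad for $H_1$ and $H_2$, and the routes you sketch fail for concrete reasons.

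\emph{Disjoint union.} For $G=G_1'\sqcup G_2'$ you need a lower bound on $x_j(G)$, i.e.\ an upper bound on $\hom(H_j,G)=\hom(H_j,G_1')+\hom(H_j,G_2')$ (for connected $H_j$). Your second bullet bounds $x_j(G)$ from above, which is the wrong direction. Nothing in the hypotheses controls the cross terms $\hom(H_1,G_2')$ and $\hom(H_2,G_1')$. A graph that is bad for $H_2$ may contain far more than $n^{v(H_1)}/\alpha^{e(H_1)}$ homomorphic copies of $H_1$, and choosing $G_i$ with $n^a\approx x_i$ constrains only its $H_i$-count.

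\emph{Auxiliary operations.} None of them can cure such a deficit. The lexicographic blow-up keeps $\alpha$ but satisfies $\hom(H_j,G[K_m])\ge m^{v(H_j)}\hom(H_j,G)$, so no $x_j$ increases. The map $G\mapsto kG$ multiplies $\alpha$ by $k$ but, for connected $H_j$, multiplies $x_j$ only by $k^{(v(H_j)-1)/e(H_j)}$. Padding with $t=O(\alpha)$ clique components either changes $n$, hence $x_j$, by at most a bounded factor, or enlarges $n$ substantially. In the latter case the cliques $K_s$ are necessarily large and contribute about $ts^{v(H_j)}$ homomorphisms on about $ts$ vertices, which caps $x_j$ at $O(\alpha^{(v(H_j)-1)/e(H_j)})=o(\alpha)$.

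These exponents are below $1$ as soon as $H_j$ contains a cycle; for disconnected $H_j$, replace $v(H_j)-1$ by $v(H_j)$ minus the number of components. That is the only nontrivial situation, because a forest is Sidorenko and would make the maximum equal to $1$.

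So the inequality $a_0(H_1\sqcup H_2)\le\max\{a_0(H_1),a_0(H_2)\}$ remains unproved in your proposal. The paper's proof contains no argument for it either. Multiplicativity of $\hom$ only reduces it to the existence of common bad graphs, and supplying those would require a genuinely new construction.
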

\begin{proof}
Let $H=H_1\sqcup H_2$ and $a<\max\{a_0(H_1),a_0(H_2)\}$. Observe $\hom(H,G)=\hom(H_1,G)\hom(H_2,G)$. Now, supposing there does not exist a $C_H>0$ such that $\alpha(G)< C_Hn^a$ we would then have for each $i=1,2$ that there exists a $C_{H_i}>0$ such that
\[
\alpha(G)\geq C_{H_i}\frac{n^{v(H_i)}}{\alpha(G)^{e(H_i)}}.
\]

As a result, letting $C_H=\min\{C_{H_i}:i\in[k]\}^{e(H)/k}$, we have
\begin{align*}
    \hom(H,G)&=\hom(H_1)\hom(H_2)\\
    &\geq C_{H_1}\frac{n^{v(H_1)}}{\alpha(G)^{e(H_1)}}\cdot C_{H_2}\frac{n^{v(H_2)}}{\alpha(G)^{e(H_2)}}\\
    &=C_H^{e(H)}\frac{n^{v(H)}}{\alpha(G)^{e(H)}},
\end{align*}

and thus $\alpha(G)\geq C_H\bigl(\frac{n^{v(H)}}{\hom(H,G)}\bigr)^{1/e(H)}$ as desired.
\end{proof}

\begin{proposition}
\label{prop: cones independence-forcing}
    If $H$ is Ramsey-Sidorenko, then $\overline{K_s}\vee H$ is Ramsey-Sidorenko with 
    \[
    a_0(\overline{K_s}\vee H)\geq\frac{a_0(H)}{1+sa_0(H)}.
    \]
\end{proposition}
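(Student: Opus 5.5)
The plan is to reduce homomorphism counts of $H'=\overline{K_s}\vee H$ to homomorphism counts of $H$ inside common neighbourhoods, and then apply the Ramsey-Sidorenko property of $H$ there. Note first that $v(H')=v(H)+s$ and $e(H')=e(H)+s\,v(H)$. Fix $a<a_0(H)$, with constant $C=C_H$, and set $b=\frac{a}{1+sa}$. Since $b\to\frac{a_0(H)}{1+sa_0(H)}$ as $a\to a_0(H)$, it suffices to prove the conjecture's inequality for $H'$ with exponent $b$.

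\textbf{Decomposition of $\hom(H',G)$.} A homomorphism of $H'$ sends the $s$ independent apex vertices to an $s$-tuple $T=(v_1,\dots,v_s)\in V(G)^s$, and sends $H$ into the common neighbourhood $N_T=\bigcap_i N(v_i)$. Hence
\[
\hom(H',G)=\sum_{T\in V(G)^s}\hom(H,G[N_T]).
\]
Let $n_T=|N_T|$. Double counting gives $\sum_T n_T=\sum_{u}\deg(u)^s$. By power means and Theorem \ref{thm: turan} (so that $\overline d\ge n/\alpha-1$),
\[
\sum_T n_T\;\ge\; n\,\overline d^{\,s}\;\ge\; n\left(\frac{n}{2\alpha}\right)^s,
\]
where $\alpha=\alpha(G)$. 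The last step uses $\alpha\le n/2$, which holds because we may assume $\alpha<c\,n^{b}$ for a small constant $c$ (otherwise we are already done).

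\textbf{Case split on the size of $N_T$.} Put $m=(\alpha/C)^{1/a}$ and call $T$ \emph{large} if $n_T>m$. For a large $T$, the first alternative of the Ramsey-Sidorenko bound for $H$ on $G[N_T]$ would give $\alpha\ge\alpha(G[N_T])\ge C n_T^{a}>\alpha$, which is impossible. So the second alternative must hold:
\[
\hom(H,G[N_T])\ge C^{e(H)}\,\frac{n_T^{v(H)}}{\alpha^{e(H)}}.
\]
The small tuples contribute at most $n^s m$ to $\sum_T n_T$. This is the key step: we need $n^s m\le\frac12\, n^{s+1}(2\alpha)^{-s}$, which amounts to $\alpha^{s+1/a}\lesssim n$, i.e.\ $\alpha\lesssim n^{a/(1+sa)}=n^{b}$. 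This is exactly our assumption once $c$ is chosen small enough. It is the step that dictates the threshold $b$, and the main thing to verify is that the constants (depending on $C$, $s$, $a$) close up.

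\textbf{Conclusion.} The large tuples therefore satisfy $\sum_{T\text{ large}}n_T\ge\frac12 n^{s+1}(2\alpha)^{-s}$, and there are at most $n^s$ of them. Jensen's inequality for $x\mapsto x^{v(H)}$ then gives
\[
\sum_{T\text{ large}} n_T^{v(H)}\ \ge\ n^s\left(\frac{n}{2(2\alpha)^s}\right)^{v(H)}.
\]
Combining this with the second alternative above,
\[
\hom(H',G)\ \ge\ c'\,\frac{n^{s+v(H)}}{\alpha^{e(H)+s v(H)}}=c'\,\frac{n^{v(H')}}{\alpha^{e(H')}}.
\]
Rearranging yields $\alpha(G)\ge C_{H'}\bigl(n^{v(H')}/\hom(H',G)\bigr)^{1/e(H')}$. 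Together with the alternative $\alpha\ge c\,n^{b}$, this establishes the statement with threshold at least $\frac{a_0(H)}{1+sa_0(H)}$.
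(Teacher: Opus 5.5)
Your proposal is correct and follows essentially the same route as the paper. Both arguments decompose $\hom(\overline{K_s}\vee H,G)$ over $s$-tuples of apex images and lower-bound $\sum_T |N_T|=\sum_u \deg(u)^s$ via convexity and Tur\'an. Both then discard tuples with small common neighbourhood, which is exactly where $\alpha\lesssim n^{a/(1+sa)}$ is used, apply the Ramsey--Sidorenko property of $H$ inside the remaining common neighbourhoods, and finish with Jensen and $|\mathcal L|\le n^s$, reaching the same constant $C_H^{e(H)}2^{-(s+1)v(H)}$.

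The differences are cosmetic. Your cutoff is $(\alpha/C)^{1/a}$ with strict inequality, where the paper uses $(2\alpha/C_H)^{1/a}$. You also state explicitly that $\alpha\le n/2$ is needed to get $\overline d\ge n/(2\alpha)$, which the paper uses without comment.
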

\begin{proof}
Suppose $H$ is Sidorenko. For any positive integer $s$, let $J=\overline{K_s}\vee H$, let $a<a_0(H)$ and $b=\frac{a}{1+sa}$. Let $C_H>0$ be such that it together with $a$ satisfies conjecture \ref{conj: Ramsey-Sidorenko}. Consider any graph $G$ of order $n$ and independence $\alpha\leq\delta n^b$ for $\delta>0$ sufficiently small. More precisely, it suffices to take $\delta$ such that $2^s\left(\frac{2}{C_H}\right)^{1/a} \delta^{s+1/a}\leq 1/2$ for the sake of this argument.

Since we have assumed that $\alpha\leq\delta n^b$, our goal is to show that $\alpha$ lies in the random domain by applying Ramsey-Sidorenko-ness of $H$ on common neighborhoods of $s$ vertices. We let\\ $M=\sum_{X\in V(G)^s}|N_\cap(X)|$.

Observe\begin{align*}
    M&=\sum_{x\in V(G)}\deg(x)^s\geq n(\overline d(G))^s\text{ by convexity.}\\
    &\geq 2^{-s}\frac{n^{s+1}}{\alpha^s}\text{ by Tur\'an's theorem.}
\end{align*}

We want to consider $s$-sets of vertices with many shared neighbors:\\ Let $\mathcal L=\{X\in V(G)^s:|N_\cap(X)|\geq\left(\frac{2}{C_H}\right)^{1/a}\alpha^{1/a}\}$. We have
\begin{align*}
    \sum_{X\not\in\mathcal L}|N_\cap(X)|&\leq n^s\cdot\left(\frac{2}{C_H}\right)^{1/a}\alpha^{1/a}\\
    &\leq 2^s\left(\frac{2}{C_H}\right)^{1/a}\frac{M\alpha^{s+1/a}}{n}\text{ applying the previous bound on $M$.}\\
    &\leq 2^s\left(\frac{2}{C_H}\right)^{1/a}\cdot M\delta^{s+1/a}\text{ since we have $\alpha\leq\delta n^b$ therefore $\frac{\alpha^{s+1/a}}{n}\leq \delta^{s+1/a}$}.\\
    &\leq M/2\text{ as we chose $\delta$ such that $2^s\left(\frac{2}{C_H}\right)^{1/a} \delta^{s+1/a}\leq 1/2$}.
\end{align*}

As a result, we obtain $\sum_{X\in\mathcal L}|N_\cap(X)|\geq M/2$. Now for any $X\in\mathcal{L}$, 
\[
\hom(H,G[N_\cap(X)])\geq C_H^{e(H)}\frac{|N_\cap(X)|^{v(H)}}{\alpha^{e(H)}},
\]
so
\begin{align*}
    \hom(J,G)&\geq\sum_{X\in\mathcal{L}}\hom(H,G[N_\cap(X)])\\
    &\geq\frac{C_H^{e(H)}}{\alpha^{e(H)}}\sum_{X\in\mathcal L}|N_\cap(X)|^{v(H)}\\
    &\geq\frac{C_H^{e(H)}}{\alpha^{e(H)}}\cdot|\mathcal L|\left(\frac{M/2}{|\mathcal L|}\right)^{v(H)}\text{ by convexity.}\\
    &\geq\frac{C_H^{e(H)}}{\alpha^{e(H)}}\cdot\frac{(M/2)^{v(H)}}{(n^{s})^{v(H)-1}}\text{ taking }|\mathcal L|\leq n^s.\\
    &\geq C_H^{e(H)}2^{-(s+1)v(H)}\cdot\frac{n^{v(H)+s}}{\alpha^{e(H)+sv(H)}}\text{ again applying the lower bound on $M$.}\\
    &=C_H^{e(H)}2^{-(s+1)v(H)}\cdot\frac{n^{v(J)}}{\alpha^{e(J)}}.
\end{align*}

\noindent We thus obtain
\[
\alpha\geq C_J\left(\frac{n^{v(J)}}{\hom(J,G)}\right)^{1/e(J)}
\]
as desired for $C_J=C_H2^{-(s+1)v(H)/e(H)}$.
\end{proof}

We also wish to consider injection-multiplicity Ramsey numbers of a graph $H$. The following conjecture is informed again by the results in this section of the paper.
\begin{conjecture}
\label{conj: independence-forcing}
    Given a graph $H$, there exists a constant $h=h(H)<1$ such that for any $G$ of order $n$ with average degree $\overline d(G)\in n^{h+o(1)}$,
    \[
    \inj(H,G)\in n^{v(H)-(1-h)e(H)-\Omega(1)}\text{ implies } \alpha(G)\in n^{1-h+\Omega(1)}.
    \]
\end{conjecture}

We call a graph $H$ \textit{independence-forcing} if it satisfies the statement in Conjecture \ref{conj: independence-forcing} for all graphs $G$. In that case we  call the infimum constant $h(H)$ such that $H$ satisfies the statement in Conjecture \ref{conj: independence-forcing} for all graphs $G$ the \textit{independence-forcing density threshold} for $H$, denoted $h_0(H)$.

The next result shows that Ramsey-Sidorenko graphs are  independence forcing.

\begin{proposition}
    Ramsey-Sidorenko graphs $H$ are independence-forcing with 
    \[
    h_0(H)\leq\max\left\{1-a_0(H),\frac{1-e(H)}{e(H)}\right\}.
    \]
\end{proposition}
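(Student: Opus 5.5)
Fix $h>\max\{1-a_0(H),\frac{1-e(H)}{e(H)}\}$ and choose $a$ with $1-h<a<a_0(H)$; this is possible because $h>1-a_0(H)$. With this $a$ and the corresponding constant $C_H$, the Ramsey-Sidorenko property of $H$ holds. Let $G$ have order $n$ and $\overline d(G)\in n^{h+o(1)}$, and suppose $\inj(H,G)\in n^{v(H)-(1-h)e(H)-\varepsilon}$ for some fixed $\varepsilon>0$. Our goal is to show $\alpha(G)\geq n^{1-h+\Omega(1)}$. The plan is to apply the inequality of Conjecture~\ref{conj: Ramsey-Sidorenko} to $H$ and check that each term of the minimum exceeds $n^{1-h}$ by a polynomial factor.

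The first term is easy: $C_Hn^{a}\in n^{1-h+(a-(1-h))}$, and $a-(1-h)>0$ is a fixed constant.

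For the second term we must pass from $\inj$ to $\hom$, since the hypothesis controls only injective homomorphisms. Every non-injective homomorphism $H\to G$ factors through a quotient $H/\pi$, where $\pi$ is a partition of $V(H)$ into independent sets with at least one nontrivial block. Hence
\[
\hom(H,G)=\sum_{\pi}\inj(H/\pi,G)\le \inj(H,G)+\sum_{\pi\ne\text{trivial}}\hom(H/\pi,G).
\]
To bound a quotient $F=H/\pi$ with $v(F)\le v(H)-1$, pick a spanning forest of $F$ and embed it greedily: each component root has at most $n$ images and each further vertex has at most $\Delta(G)$ images. This yields $\hom(F,G)\le n^{c(F)}\Delta(G)^{v(F)-c(F)}$.

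The main obstacle is that the hypothesis fixes only the average degree, not $\Delta(G)$. I would handle this first by deleting vertices of degree greater than $n^{h+\eta}$ for a small $\eta>0$. By Markov's inequality at most $n^{1-\eta+o(1)}$ vertices are removed, so a subgraph $G'$ on $n'\geq n/2$ vertices remains, with $\Delta(G')\le n^{h+\eta}$ and $\alpha(G')\le\alpha(G)$. The injective counts only decrease, so the hypothesis still holds in $G'$.

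In $G'$ the degenerate terms satisfy $\hom(F,G')\le n^{v(H)-1+(h+\eta)(\ldots)}$, and we need these to be at most $n^{v(H)-(1-h)e(H)-\Omega(1)}$. Roughly this amounts to $(1-h)e(H)<1$, which is where the second term $\frac{1-e(H)}{e(H)}$ in the threshold should enter; the bookkeeping here is the part I am least sure of. If the degenerate terms are instead dominated by edge counts, I would use the weaker bound $\hom(F,G')\le n\cdot\Delta^{v(F)-1}$ and compare exponents directly.

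Once $\hom(H,G')\le n^{v(H)-(1-h)e(H)-\varepsilon'}$ is established, the second term satisfies
\[
\left(\frac{n'^{v(H)}}{\hom(H,G')}\right)^{1/e(H)}\ge n^{1-h+\varepsilon'/e(H)}.
\]
Both terms of the minimum are then $n^{1-h+\Omega(1)}$, so $\alpha(G)\ge\alpha(G')\in n^{1-h+\Omega(1)}$. Since $h$ was arbitrary above the stated maximum, taking the infimum gives the claimed bound on $h_0(H)$.
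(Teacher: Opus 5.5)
The gap is at the step you flagged, and better bookkeeping cannot close it. Your scheme needs every degenerate term $\hom(H/\pi,G')$ to be at most $n^{v(H)-(1-h)e(H)-\Omega(1)}$.

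With the crude bound $n^{v(H)-1}$, this is the condition $(1-h)e(H)<1$, i.e.\ $h>\frac{e(H)-1}{e(H)}$. Your hypothesis only gives $h>\frac{1-e(H)}{e(H)}$. That number is $\le 0$, so the stated maximum reduces to $1-a_0(H)$ and buys you nothing at this step.

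Pruning plus the spanning-forest bound improves matters but does not remove the obstruction. Take connected $H$ with $\beta=e(H)-v(H)+1$, and a quotient with $k=v(H)-|\pi|$ identifications. Then $\hom(H/\pi,G')\le n^{1+(h+\eta)(v(H)-k-1)}$, which is small enough iff $h(\beta+k)>\beta$. So your argument still requires $h>\frac{\beta}{\beta+1}$ whenever $H$ is not complete.

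In fact the printed bound is false. Take $H=C_4$. It is Sidorenko, so $a_0(C_4)=1$ and the proposition asserts $h_0(C_4)\le 0$.
For $h<1/2$, take any near-regular $G$ of degree $n^{h}$, so pruning removes nothing. Then $\hom(C_4,G)\ge\sum_x\deg(x)^2\approx n^{1+2h}\gg n^{4h}$, however small $\inj(C_4,G)$ is, so the inequality you want fails outright.
The conclusion itself also fails for every $h<1/5$:
\begin{itemize}
\item Matchings refute every $h\le 0$.
\item For $0<h<1/5$, delete one edge from each of the $O(n^{4h})$ four-cycles of $G(n,n^{h-1})$. This leaves a $C_4$-free graph of average degree $n^{h+o(1)}$ with $\alpha\le n^{1-h+o(1)}$. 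The reason is that, with high probability, every set of size $Cn^{1-h}\log n$ spans $\Omega(n^{1-h}\log^2 n)\gg n^{4h}$ edges.
\end{itemize}

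The paper's own proof has the same requirement. It bounds the non-injective maps by $\binom{v(H)}{2}n^{v(H)-1}$, since each identifies some pair of vertices. This puts a term of order $n^{1/e(H)}$ into the Ramsey--Sidorenko minimum, and its claim that this term exceeds $n^{1-h}$ polynomially ``by our choice of $h$'' needs $1/e(H)>1-h$.

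So what is actually proved, and what you should aim for, is $h_0(H)\le\max\{1-a_0(H),\frac{e(H)-1}{e(H)}\}$. The printed $\frac{1-e(H)}{e(H)}$ is a sign slip. For that corrected statement your pruning step is unnecessary, because the crude count uses no degree information at all.

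Your finer route is still worth keeping: the decomposition $\hom(H,G)=\sum_\pi\inj(H/\pi,G)$ together with the pruned maximum degree. Carried through, it gives two sharper bounds:
\begin{itemize}
\item $h_0(H)\le\max\{1-a_0(H),\frac{\beta}{\beta+1}\}$ for connected $H$. Note $\frac{\beta}{\beta+1}\le\frac{e(H)-1}{e(H)}$ since $v(H)\ge 2$.
\item $h_0(H)\le 1-a_0(H)$ for complete $H$, where $\hom=\inj$.
\end{itemize}
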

\begin{proof}
Fix a nontrivial Ramsey-Sidorenko graph $H$. Let $h>\max\left\{1-a_0(H),\frac{1-e(H)}{e(H)}\right\}$ and consider any graph $G$ of order $n$ and average degree $\overline d(G)\in n^{h+\Omega(1)}$. 
Since each homomorphism is either injective or identifies two vertices of $H$,
\[
\inj(H,G)+\binom{v(H)}2(n)_{v(H)-1}\geq\hom(H,G).
\]
    
We can thus take
\[
\max\{2\cdot \inj(H,G),2\binom{v(H)}2(n)_{v(H)-1}\}\geq\hom(H,G).
\]

Since $H$ is Ramsey-Sidorenko, we have:
\begin{align*}
    \alpha(G)&\geq C_H\min\left\{n^{a},\left(\frac{n^{v(H)}}{\hom(H,G)}\right)^{1/e(H)}\right\}\\
    &\geq C_H\min\left\{n^a,\left(\frac{n^{v(H)}}{2\binom{v(H)}2(n)_{v(H)-1}}\right)^{1/e(H)},\left(\frac{n^{v(H)}}{2\inj(H,G)}\right)^{1/e(H)}\right\}\\
    &\geq C_H\min\left\{n^a,n^{1/e(H)},\left(\frac{n^{v(H)}}{2\inj(H,G)}\right)^{1/e(H)}\right\}.
\end{align*}

Now if $\alpha(G)\geq C_H \min\{n^a,n^{1/e(H)}\}$, then
\[
\alpha(G)\geq C_H\min\{n^a,n^{1/e(H)}\}\in C_H n^{h+\Omega(1)}
\]
by our choice of $h$. 

On the other hand if $\alpha(G)\geq C_H\left(\frac{n^{v(H)}}{2\inj(H,G)}\right)^{1/e(H)}$, then
\[
\alpha(G)\in C_H\left(\frac{n^{v(H)}}{2n^{v(H)-(1-h)e(H)-\Omega(1)}}\right)^{1/e(H)}=n^{1-h+\Omega(1)}.
\]

In either case we have shown $\alpha(G)\in n^{1-h+\Omega(1)}$, so $H$ is independence-forcing with density threshold at most $\max\left\{1-a_0(H),\frac{1-e(H)}{e(H)}\right\}$.

\end{proof}

\noindent We have the following theorem on injective-copy counts of trees:

\begin{theorem}[Mubayi-Verstra\"ete \cite{mubayi2015trees}]
\label{thm: trees-density}
    Suppose a graph $G$ has order $n$, average degree $\overline d$, and $T_t$ is a tree on $t$ vertices. Then
$$\inj(T_t,G)\geq n\overline d(\overline d-t+2)^{t-2}.$$
\end{theorem}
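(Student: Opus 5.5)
We prove the Mubayi--Verstra\"ete bound by counting embeddings of $T_t$ greedily, one leaf at a time. The first step is to pass to a subgraph with large minimum degree. Repeatedly delete any vertex whose current degree is less than $\overline d/2$. Each deletion removes fewer than $\overline d/2$ edges, so the total number of edges removed is less than $n\overline d/2 = e(G)$. Hence a nonempty subgraph $G'$ remains with minimum degree at least $\overline d/2$.

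That gives only $\overline d/2$, though, and the claimed bound has exactly $\overline d$ with no constant loss. So the plan must instead weight the count by degrees rather than rely on a minimum-degree core. We order $T_t$ as $v_1,v_2,\dots,v_t$ so that $v_1v_2$ is an edge and each $v_i$ with $i\ge 3$ is a leaf attached to some earlier $v_{j(i)}$. Reversing the order, each $v_i$ is a leaf of the tree $T_t[v_1,\dots,v_i]$.

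First count embeddings of the edge $v_1v_2$. There are $\sum_x \deg(x)=n\overline d$ ordered edges $(x,y)$. Each further vertex $v_i$ is mapped to a neighbor of the image of $v_{j(i)}$ avoiding the at most $i-2$ already-used images other than that parent image. This gives at least $\deg(\phi(v_{j(i)}))-(i-2)\ge \deg(\phi(v_{j(i)}))-t+2$ choices.

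The main obstacle is converting the product $\prod_i(\deg(\phi(v_{j(i)}))-t+2)$, summed over partial embeddings, into $n\overline d(\overline d-t+2)^{t-2}$. This is a Sidorenko-type inequality for trees with a truncation, and the bound is trivial when $\overline d<t-2$. I would prove it by induction on $t$, removing a leaf $v_t$ with parent $p$. The quantity to bound is then
\[
\sum_{\phi\in \mathrm{Inj}(T_{t-1},G)}\bigl(\deg(\phi(p))-t+2\bigr)_+ .
\]
One uses the stronger inductive statement that for any nonnegative convex function $f$ of the degree of the root image, the weighted count $\sum_\phi f(\deg\phi(p))$ is at least $n\overline d(\overline d-t+3)^{t-3}f(\overline d)$, up to the right shifts. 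This is obtained via Jensen's inequality applied to the stationary edge distribution. The natural measure here is a uniform random ordered edge followed by a non-backtracking random walk; its vertex marginal is proportional to degree, and along it $\mathbb E[\deg]\ge\overline d$ by Cauchy--Schwarz ($\sum\deg^2/\sum\deg\ge\overline d$). The convex function $x\mapsto (x-t+2)_+^{k}$ then lets Jensen give the factor $(\overline d-t+2)$ for each added leaf.

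If the convexity bookkeeping fails for general trees, the fallback is the known approach for paths: bound walks by the spectral inequality $\mathbf 1^TA^k\mathbf 1\ge n\overline d^{\,k}$, then subtract the non-injective walks crudely.
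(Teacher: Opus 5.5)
Your greedy step is fine, but the inequality you then set out to prove is false. The gap is therefore more than bookkeeping.

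\textbf{The uniform-shift target fails.} Take $T_4=P_4$ and $G=K_{2,b}$ with $b\ge 3$. In every admissible ordering, the parents $v_{j(3)},v_{j(4)}$ are the two internal vertices of the path, and these are adjacent. Their images therefore lie on opposite sides, so one of them is a vertex of degree $2=t-2$. Hence every product $\prod_{i\ge 3}(\deg\phi(v_{j(i)})-t+2)$ vanishes. On the other hand, $\overline d=4b/(b+2)>2$ and $n\overline d(\overline d-2)^2=16b(b-2)^2/(b+2)^2>0$; the true count is $\inj(P_4,K_{2,b})=4b(b-1)$. The same happens for every non-star tree in $K_{t-2,b}$ with $b>t-2$: two adjacent internal vertices of $T_t$ are both parents, and one of their images has degree $t-2$. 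Your Jensen argument does work for stars, because all factors sit at one vertex and $x\mapsto x(x-t+2)_+^{t-2}$ is convex.

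\textbf{The single-root induction cannot close.} Peeling a leaf that hangs off some $p'\neq p$ leaves a weight depending on both $\phi(p)$ and $\phi(p')$. Peeling $p$ itself leaves a sum of $f$ over the available neighbours of the image of its neighbour. Per-vertex Jensen says nothing about such products. For example, $\sum(\deg x-2)_+(\deg y-2)_+$ over ordered edges $(x,y)$ of $K_{2,b}$ is $0$, although $\overline d>2$.

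\textbf{The random-walk measure does not rescue this.} Under a uniform injective embedding of $T_{t-1}$, the law of $\phi(p)$ is not degree-biased. The non-backtracking walk from a uniform oriented edge is stationary precisely because it forbids only the previous vertex. An injective tree embedding must avoid all earlier images, siblings included, and that breaks stationarity.

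\textbf{The path fallback has the same hole.} The bound $\mathbf 1^\top A^k\mathbf 1\ge n\overline d^{\,k}$ holds. But in $K_{2,b}$ there are $4b^2+4b$ degenerate walks of length $3$, against $n\overline d^{\,3}=64b^3/(b+2)^2$, so a crude subtraction yields nothing.

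\textbf{The small-degree regime.} Your remark that the bound is trivial for $\overline d<t-2$ is wrong when $t$ is even. Then $(\overline d-t+2)^{t-2}>0$ and the inequality can fail outright. For instance, $G=K_2$ with any tree on four vertices gives right-hand side $2\cdot1\cdot(-1)^2=2$, but $\inj(T_4,K_2)=0$. So the statement as quoted needs $\overline d\ge t-2$ (or the positive part of $\overline d-t+2$), and a correct proof must use that hypothesis.

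The paper gives no proof to compare against; it simply imports the result from Mubayi--Verstra\"ete. Your proposal therefore has to stand on its own. As it stands, it lacks the key ingredient: a way to account for already-used vertices that does not pay a fixed shift at every parent, together with an averaging argument that controls the joint distribution of degrees along the embedded tree.
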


A simple corollary of this result is that trees are vacuously independence-forcing with density threshold 0.

\section{Subgraph Counting For The Hamming Case}

In this section we show that for any fixed graph $H$, the proximity graph $\Gamma_{<\delta n}(\Fq^n)$ has asymptotically at least the random number of injections of $H$. In particular this shows that even if the Conjectures \ref{conj: Ramsey-Sidorenko} and \ref{conj: independence-forcing} are true for all graphs $H$, they  alone could not be used to find independent sets in $\Gamma_{<\delta n}(\Fq^n)$ (or codes in $\Fq^n$) beating the Gilbert-Varshamov lower bound.

We need to introduce more notation: For a probability distribution $\mu$ on $\Omega$, write
\[
\mathcal H_q(\mu)=-\sum_{x\in\Omega}\mu(x)\log_q\mu(x)
\]
for the base-$q$ entropy of $\mu$. We say a distribution has \textit{full support} if no element in the space is assigned probability 0. For a fixed $q$ and $0<\delta<1-1/q$, we  write 
\[
H_q(\delta)=-(1-\delta)\log_q(1-\delta)-\delta\log_q\left(\frac{\delta}{q-1}\right)
\]
for the normalized logarithmic density of a Hamming ball. We also use the convention that $0\log_q0=0$ for any prime power $q$.

The following is a standard specialization of the classical Karush--Kuhn--Tucker (KKT)  necessary conditions under Slater's constraint qualification (cf. \cite[Theorem L7.4]{farina2025kkt}).

\begin{theorem}[Karush--Kuhn--Tucker necessary conditions under Slater's condition]
\label{thm: kkt-slater}
Let \(U\subseteq\mathbb R^n\) be open and convex. Let $f,g_1,\dots,g_m:U\to\mathbb R$ be differentiable, and suppose that each \(g_i\) is convex. Let $h_1,\dots,h_r:U\to\mathbb R$ be affine.

Suppose that \(x^*\in U\) is a local minimizer of
\[
\begin{aligned}
\min\quad & f(x)\\
\text{subject to}\quad
& g_i(x)\leq 0,\qquad i=1,\dots,m,\\
& h_j(x)=0,\qquad j=1,\dots,r.
\end{aligned}
\]
Suppose further that Slater's condition holds: there exists
\(x^0\in U\) such that
\[
g_i(x^0)<0\qquad(i=1,\dots,m)\qquad\text{and}\qquad h_j(x^0)=0\qquad(j=1,\dots,r).
\]

Then there exist multipliers
\[
\lambda_1,\dots,\lambda_m\geq 0,
\qquad
\eta_1,\dots,\eta_r\in\mathbb R,
\]
such that:
\begin{enumerate}
\item[(i)] \emph{Primal feasibility:}
\[
g_i(x^*)\leq 0\qquad(i=1,\dots,m),
\qquad
h_j(x^*)=0\qquad(j=1,\dots,r).
\]

\item[(ii)] \emph{Dual feasibility:}
\[
\lambda_i\geq 0\qquad(i=1,\dots,m).
\]

\item[(iii)] \emph{Complementary slackness:}
\[
\lambda_i g_i(x^*)=0
\qquad(i=1,\dots,m).
\]

\item[(iv)] \emph{Stationarity:}
\[
\nabla f(x^*)
+\sum_{i=1}^m\lambda_i\nabla g_i(x^*)
+\sum_{j=1}^r\eta_j\nabla h_j(x^*)
=0.
\]
\end{enumerate}
\end{theorem}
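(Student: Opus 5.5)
The plan is to reduce the statement to a linear alternative (Farkas' lemma) by showing that, under Slater's condition, no first-order feasible direction can decrease $f$ at $x^*$.

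Let $I=\{i: g_i(x^*)=0\}$ be the set of active inequality constraints. Primal feasibility (i) is immediate because $x^*$ is feasible. Once stationarity is established with multipliers supported on $I$, we set $\lambda_i=0$ for every $i\notin I$. This gives complementary slackness (iii) automatically, and dual feasibility (ii) will come out of the Farkas step.

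\textbf{Step 1 (no descent direction in the linearized cone).} I will show that there is no $d\in\mathbb R^n$ satisfying all of
\[
\nabla f(x^*)\cdot d<0,\qquad \nabla g_i(x^*)\cdot d\leq 0\ (i\in I),\qquad \nabla h_j(x^*)\cdot d=0\ (j=1,\dots,r).
\]
Let $d^0=x^0-x^*$, where $x^0$ is the Slater point.

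\begin{enumerate}
\item For $i\in I$, convexity of $g_i$ gives $\nabla g_i(x^*)\cdot d^0\leq g_i(x^0)-g_i(x^*)=g_i(x^0)<0$.
\item Each $h_j$ is affine, so $\nabla h_j(x^*)\cdot d^0=h_j(x^0)-h_j(x^*)=0$.
\end{enumerate}

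Suppose such a $d$ existed, and set $d_\varepsilon=d+\varepsilon d^0$. For small $\varepsilon>0$ we still have $\nabla f(x^*)\cdot d_\varepsilon<0$. We also now have $\nabla g_i(x^*)\cdot d_\varepsilon<0$ strictly for $i\in I$, and $\nabla h_j(x^*)\cdot d_\varepsilon=0$.

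Now move along $x^*+td_\varepsilon$ for small $t>0$.

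\begin{enumerate}
\item The point lies in $U$, since $U$ is open.
\item The affine constraints hold exactly, because $h_j(x^*+td_\varepsilon)=h_j(x^*)+t\nabla h_j(x^*)\cdot d_\varepsilon=0$.
\item Inactive constraints $g_i$ with $i\notin I$ remain negative by continuity.
\item For active $i\in I$, the first-order expansion $g_i(x^*+td_\varepsilon)=t\,\nabla g_i(x^*)\cdot d_\varepsilon+o(t)$ is negative for small $t$.
\item Likewise $f(x^*+td_\varepsilon)=f(x^*)+t\nabla f(x^*)\cdot d_\varepsilon+o(t)<f(x^*)$.
\end{enumerate}

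So arbitrarily close to $x^*$ there are feasible points with strictly smaller objective, contradicting local minimality.

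\textbf{Step 2 (Farkas' lemma).} Write each equality as two inequalities, $\pm\nabla h_j(x^*)\cdot d\leq 0$. Step 1 then says the system $\{a_k\cdot d\leq 0\ \forall k,\ \nabla f(x^*)\cdot d<0\}$ is infeasible, where the $a_k$ range over $\nabla g_i(x^*)$ for $i\in I$ and $\pm\nabla h_j(x^*)$. Farkas' lemma yields nonnegative coefficients $\mu_k$ with $-\nabla f(x^*)=\sum_k \mu_k a_k$.

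Collect the coefficients as follows:
\begin{enumerate}
\item $\lambda_i=\mu_i\geq 0$ for $i\in I$, and $\lambda_i=0$ otherwise;
\item $\eta_j=\mu_j^+-\mu_j^-\in\mathbb R$ for each equality constraint.
\end{enumerate}
This gives stationarity (iv), together with (ii) and (iii).

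The main obstacle is Step 1, specifically verifying feasibility of $x^*+td_\varepsilon$ using only differentiability (not $C^1$). Everything there rests on first-order Taylor expansions at the single point $x^*$. I would be careful that the $o(t)$ terms are taken along the fixed direction $d_\varepsilon$, and that $\varepsilon$ is fixed before letting $t\to 0$. The role of Slater's condition is exactly to supply $d^0$, which upgrades the non-strict linearized inequalities to strict ones.
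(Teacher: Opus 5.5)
Your argument is correct and complete. The paper does not prove this statement at all. It quotes it as a standard specialization of KKT and defers to Farina's lecture notes (Theorem L7.4), so there is only a citation to compare against, and your proposal serves as a self-contained justification.

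The route you chose is the standard proof: use the Slater direction $d^0=x^0-x^*$ to make the linearized active constraints strict, rule out a descent direction in the linearized cone by moving along a fixed ray, then apply Farkas. Its advantage over the citation is that it shows exactly where each hypothesis is used.
\begin{itemize}
\item Convexity of the $g_i$ and of $U$ enters only through the gradient inequality $\nabla g_i(x^*)\cdot(x^0-x^*)\le g_i(x^0)-g_i(x^*)$. This requires the segment from $x^*$ to $x^0$ to lie in $U$, which is worth stating explicitly.
\item Affineness of the $h_j$ is what keeps $x^*+td_\varepsilon$ exactly feasible for the equalities. Merely smooth $h_j$ would require an implicit-function argument and a rank condition.
\item Only differentiability of $f$ and the $g_i$ at $x^*$ is needed, because every expansion is along a fixed ray with $\varepsilon$ chosen before $t\to 0$.
\end{itemize}

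Your proof also makes visible something the citation hides: the perturbation by $\varepsilon d^0$ is needed only to absorb the $o(t)$ terms of nonlinear active constraints. In the paper's actual application (the proof of Lemma~\ref{lem: entropy-inequality}, on $U=(0,\infty)^{|\Omega|}$), every constraint is affine. There, $x^*+td$ is feasible for small $t>0$ for every $d$ in the linearized cone, so Slater's condition is not actually needed to obtain the multipliers.
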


Before the proof of the main theorem of this section we will need the following lemma:

\begin{lemma}
\label{lem: entropy-inequality}
Let $q$ be a prime power and $0<\delta<1-\frac1q$. For a given graph $H$, fix any vertex $r\in V(H)$ and let $\Omega=\{x:V(H)\to\Fq:x_r=0\}$. Then

\begin{align*}
    \Phi_{H,q}(\delta)&:= \max\left\{\mathcal H_q(\mu):\Pr_\mu(x_u\not=x_v)\leq\delta\text{ for every }e=uv\in E(H)\right\}\\
    &\ge v-1-m+mh_q(\delta).
\end{align*}

\end{lemma}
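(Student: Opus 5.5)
Here $v=v(H)$ and $m=e(H)$, and $h_q$ denotes the entropy function $H_q$ defined above. The lemma is a lower bound on a maximum, so it is enough to exhibit one feasible distribution $\mu$ on $\Omega$ whose entropy is at least $v-1-m+mH_q(\delta)$. The natural candidate puts independent noise on the edges. Since any $H$ may be disconnected or contain cycles, I will first treat a spanning forest and then handle the remaining edges.

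\emph{Step 1 (forest case).} Fix a spanning forest $F$ of $H$ in which the component containing $r$ is rooted at $r$; root each other component at an arbitrary vertex. Define $\mu$ as follows. Set $x_r=0$. Give each other root an independent uniform value in $\Fq$. For each tree edge $uv$ with $u$ the parent, set $x_v=x_u+z_{uv}$, where the $z_{uv}$ are independent with $\Pr(z=0)=1-\delta$ and $\Pr(z=a)=\delta/(q-1)$ for each $a\neq0$. The map from (root values, edge noises) to $x$ is a bijection onto its image, so entropy is additive. This gives
\[
\mathcal H_q(\mu)=(c-1)+|E(F)|\,H_q(\delta)=(c-1)+(v-c)H_q(\delta),
\]
where $c$ is the number of components. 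Every forest edge satisfies $\Pr(x_u\neq x_v)=\delta$.

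\emph{Step 2 (non-forest edges).} A non-forest edge $uv$ joins two vertices of the same tree. There $x_u-x_v$ is a sum of $k\ge2$ independent noises, so
\[
\Pr(x_u\neq x_v)=(1-1/q)\bigl(1-(1-\tfrac{q\delta}{q-1})^k\bigr),
\]
which can exceed $\delta$. The constraint then fails, and I need a modification. My first choice is to lower the noise on all edges to a level $\delta'$ small enough that every path of length up to $v$ has disagreement probability at most $\delta$. That costs entropy, so instead I will compare with the target bound directly. Write the target as
\[
v-1-m+mH_q(\delta)=(v-1)-m\,(1-H_q(\delta)).
\]
Each edge is charged a deficit of $1-H_q(\delta)\ge0$ relative to the uniform distribution, which has entropy $v-1$.

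\emph{Step 3 (general construction).} This suggests a cleaner construction: a product distribution over edges, conditioned. Start from $x$ uniform on $\Omega$. For each edge $e$ independently, with suitable weight, tilt towards agreement. Equivalently, take $\mu(x)\propto\prod_{uv\in E}w(x_u,x_v)$, where $w(a,a)=1-\delta$ and $w(a,b)=\delta/(q-1)$ for $a\neq b$. A cleaner way to get the bound is the chain rule plus subadditivity. Let $x$ be uniform on the set $S\subseteq\Omega$ of labelings disagreeing on at most a $\delta$-fraction of a suitable structure. Alternatively use the Gibbs inequality: for any $\mu$,
\[
\mathcal H_q(\mu)\ge \log_q|\Omega|-D(\mu\|\text{unif}),
\]
and the edge marginals of the tilted measure each have KL cost $1-H_q(\delta)$. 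Subadditivity of KL over edges goes the wrong way in general, though, so this needs care.

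\emph{Main obstacle.} The main obstacle is exactly the cycles: producing a distribution that satisfies every edge constraint with total entropy loss at most $m(1-H_q(\delta))$. The forest case already gives loss $(v-c)(1-H_q(\delta))\le m(1-H_q(\delta))$, up to the $c-1$ term, which only helps. If Step 2 fails, I would try a convex combination: mix the forest construction with the point mass on the constant labeling, and balance the weights. Concavity of entropy then bounds the loss. I would check that the extra edges contribute at most their $1-H_q(\delta)$ share each.
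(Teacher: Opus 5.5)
\textbf{Verdict: genuine gap.} Your Step 1 is correct, and when $H$ is a forest it proves the lemma outright. There $m=v-c$, so your distribution has entropy exactly $(c-1)+(v-c)H_q(\delta)=v-1-m+mH_q(\delta)$.

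For any $H$ with a cycle, however, you list candidate repairs without carrying any out, and none of them works as stated.
\begin{itemize}
\item \emph{Lowering the noise uniformly} is too costly. For $H=C_k$ the closing edge forces per-edge noise $O(1/k)$, so the entropy is $O(\log k)$ against a target of $kH_q(\delta)-1$. Already for $K_3$, $q=2$, $\delta=0.4$ you get about $1.70$ versus $3H_2(0.4)-1\approx 1.913$.
\item \emph{Mixing a forest measure with the point mass at $\mathbf 0$} also fails on $C_k$. Either the forest noise is $O(1/k)$, and the forest measure has entropy only $O(\log k)$, or the closing edge disagrees with probability close to $1-\frac1q$. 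In the second case feasibility caps the weight on the forest measure at roughly $\frac{q\delta}{q-1}$, so the mixture has entropy at most roughly $\frac{q\delta}{q-1}(k-1)+\log_q 2$. This is below $kH_q(\delta)-1$ for large $k$, because $H_q(\delta)>\frac{q\delta}{q-1}$ on $(0,1-\frac1q)$ by strict concavity.
\item \emph{The fixed-weight tilt} $\mu\propto\prod_e w(x_u,x_v)$ has entropy $\log_q Z+\sum_e\bigl[-(1-p_e)\log_q(1-\delta)-p_e\log_q\frac{\delta}{q-1}\bigr]$, where $p_e=\Pr_\mu(x_u\neq x_v)$. Each bracket is increasing in $p_e$ and equals $H_q(\delta)$ only at $p_e=\delta$. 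Cycles push $p_e$ below $\delta$ (for $K_3$, $q=2$, $\delta=0.4$ one gets $p_e=8/21$), and the measure genuinely falls short: its entropy is about $1.891<1.913$.
\end{itemize}

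The missing idea is to tune the tilt separately on each edge, so that every edge with positive weight is tight, and then to bound the partition function rather than compute the measure. The paper does this as follows.
\begin{itemize}
\item It shows the maximizer has full support and applies KKT, giving $\mu_x=q^{-\sum_e\lambda_e\mathbf 1[x_u\neq x_v]}/Z(\lambda)$ with $\lambda_e\ge 0$ and complementary slackness.
\item It deduces $\mathcal H_q(\mu)=\delta\sum_e\lambda_e+\log_q Z(\lambda)$.
\item It writes $q^{-\lambda_e\mathbf 1[x_u\neq x_v]}=q^{-\lambda_e}+(1-q^{-\lambda_e})\mathbf 1[x_u=x_v]$ and expands over $F\subseteq E(H)$. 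The number of $x\in\Omega$ constant on the components of $(V(H),F)$ is $q^{c(F)-1}\ge q^{v-1-|F|}$.
\item This gives $\log_q Z(\lambda)\ge v-1-m+\sum_e\log_q\bigl(1+(q-1)q^{-\lambda_e}\bigr)$, and minimizing $\delta\lambda+\log_q(1+(q-1)q^{-\lambda})$ yields $H_q(\delta)$ per edge.
\end{itemize}
You can obtain the same $\lambda$ without KKT by minimizing the convex function $\delta\sum_e\lambda_e+\log_q Z(\lambda)$ over $\lambda\ge 0$. It is coercive because $Z\ge 1$, and its first-order conditions are exactly feasibility plus complementary slackness for $\mu_\lambda$.

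The crude count $c(F)\ge v-|F|$ is where the cycles are absorbed. That is why no explicit cycle-compatible construction is needed.
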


\begin{proof}
Throughout this argument we let $\varphi(x)=x\log_qx$.

First, our definition of $\Phi_{H,q}(\delta)$ as the maximum rather than the supremum is well-defined: Since the set of distributions $\mu$ satisfying $\Pr_\mu(x_u\not=x_v)\leq\delta$ for every $e\in E(H)$ forms a nonempty closed and bounded subset of the finite-dimensional space $\mathbb R^{|\Omega|}$ and base $q$ entropy is continuous, we can take a probability distribution $\mu$ on $\Omega$ attaining $\Phi_{H,q}(\delta)$.

Now we consider $\Phi_{H,q}$ as an optimization problem. Consider $\mu$ a vector in $\mathbb R^{|\Omega|}$, and let $a_{e}:=(\mathbf1_{\{x_u\not=x_v\}})_{x\in\Omega}$ for any $e=uv\in E(H)$. Then we find $-\Phi_{H,q}$ as
\[
\begin{aligned}
\min\quad & -\mathcal{H}_q(\mu)\\
\text{subject to}\quad
&\mathbf1^\top\mu-1=0,\\
&-\mu_x\leq0\qquad x\in\Omega,\\
&a_{e}^\top\mu-\delta\leq0\qquad e\in E(H)
\end{aligned}
\]
Each constraint is affine, and $-\mathcal{H}_q(\mu)$ is convex and differentiable over $\mu$ with full support, so we  show the optimal $\mu$ has full support in order to show we can use the KKT theorem:

Define $\nu$ coordinate-wise with $\nu_x=(1-\epsilon)\mathbf1_{\{x=\mathbf0\}}+\epsilon V_x$ where $V$ is the uniform distribution on $\Omega$ viewed as a vector in $\mathbb R^{|\Omega|}$. Then
\[
\Pr_{\nu}(x_u\not=x_v)=\epsilon\bigl(1-\frac 1q\bigr)<\delta\text{ choosing }0<\epsilon<\delta\frac{q}{q-1},
\]
and $\nu$ has full support: for any $x\in\Omega$, $\nu_x\geq\epsilon V_x>0$. Now we mix $\mu$ and $\nu$: let $\sigma(t)=(1-t)\mu+t\nu$ for $0<t<1$.

We now compare the contributions of $x$ yielding $\mu_x=0$ and $\mu_x>0$ to the difference in entropy of $\mu$ and $\sigma(t)$:

Let $P=\{x:\mu_x>0\}$ and consider $x\in P$. By the mean-value theorem,
\[
\varphi(\mu_x)-\varphi(\sigma(t)_x)=\varphi'(\beta_x(t))(\mu_x-\sigma(t)_x)=\varphi'(\beta_x(t))t(\mu_x-\nu_x)
\] 
for some $\beta_x(t)$ between $\mu_x$ and $\sigma(t)_x$. If $\nu_x\not=\mu_x$, we  let $t\leq\frac{\mu_x}{2|\nu_x-\mu_x|}$ for all of the finitely many $x\in P$ so that $\sigma(t)_x\in[\frac{\mu_x}2,\frac{3\mu_x}2]$, implying that
\[
|\varphi'(\beta(t)_x)|\leq\sup_{s\in[{\mu_x/}2,\ 3\mu_x/2]}\left|\varphi'(s)\right|<\infty.
\]
For the $x\in P$ with $\mu_x=\nu_x$ this follows immediately.

As a result, $\varphi(\mu_x)-\varphi(\sigma(t)_x)\leq C_xt$ for 
\[
C_x=|\mu_x-\nu_x|\sup_{s\in[{\mu_x/}2,\ 3\mu_x/2]}\left|\varphi'(s)\right|
\]
a constant with respect to $t$. 

Let $C=\sum_{x\in P}C_x$. Then for sufficiently small $t$,
\[
\sum_{x\in P}(\varphi(\sigma(t)_x)-\varphi(\mu_x))\geq-Ct.
\]

Now, if $x_0$ satisfies $\mu_{x_0}=0$, then
\begin{align*}
    \mathcal H_q(\sigma(t))-\mathcal H_q(\mu)&\geq\sum_{x\in P}\left(\varphi(\sigma(t)_x)-\varphi(\mu_x)\right)+\varphi(\sigma(t)_{x_0})\\
    &\geq-Ct+t\nu_{x_0}\log_q\frac{1}{t\nu_{x_0}}\\
    &\geq t(\nu_{x_0}\log_q\frac{1}{t}-C)\\
    &>0
\end{align*}
choosing $t$ sufficiently small.
This would therefore imply $\sigma(t)$ has higher entropy than $\mu$, and as $\sigma(t)$ is feasible, as it is a convex combination of feasible distributions, and $\mu$ is optimal, this cannot be the case. Thus, $\mu$ must have full support. We thus obtain the equivalent simplified optimization problem:

\[
\begin{aligned}
\min_{(0,\infty)^{|\Omega|}}\quad & -\mathcal{H}_q(\mu)\\
\text{subject to}\quad
&\mathbf1^\top\mu-1=0,\\
&a_{e}^\top\mu-\delta\leq0\qquad e\in E(H)
\end{aligned}
\]
Since the original optimizer \(\mu\) has full support, it is also a
local minimizer of this restricted problem. We also have shown $-\mathcal H_q$ is differentiable on an open neighborhood of $\mu$ contained in $(0,\infty)^{|\Omega|}$, and the distribution $\nu$ satisfies Slater's condition:
\[
\nu\in(0,\infty)^{|\Omega|},\qquad\mathbf1^\top\nu=1,\qquad a_e^\top\nu-\delta<0\qquad(e\in E(H)).
\]
We may therefore apply the ordinary finite-dimensional KKT multiplier
theorem, restricting the domain of $-\mathcal H_q$ to the open set $U:=(0,\infty)^{|\Omega|}$. Positivity is built into $U$, so the nonnegativity constraints need not be included. The KKT theorem yields numbers
\[
\lambda_e\geq0,\qquad\eta\in\mathbb R\qquad(e\in E(H))
\]
such that
\[
\lambda_e\bigl(a_e^\top\mu-\delta)=0\qquad (e\in E(H)),
\]
and
\[
\log_q(\mu_x)+\frac{1}{\ln q}+\sum_{e\in E(H)}\lambda_e(a_e)_x+\eta=0\qquad (x\in\Omega).
\]

\noindent Exponentiating,
\[
\mu_x=q^{-\sum_{e\in E(H)}\lambda_e(a_e)_x}q^{-\frac{1}{\ln q}-\eta}.
\]
Since 
\[
1=\sum_{x\in\Omega}\mu_x=q^{-\frac{1}{\ln q}-\eta}\left(\sum_{x\in\Omega}q^{-\sum_{e\in E(H)}\lambda_e(a_e)_x}\right),
\]
we can write 
\[
\mu_x=\frac{q^{-\sum_{e\in E(H)}\lambda_e(a_e)_x}}{Z(\lambda)},\ \text{where }Z(\lambda)=\sum_{x\in\Omega}q^{-\sum_{e\in E(H)}\lambda_e(a_e)_x}.
\]
Translating back to the logarithm we have
\[
-\log_q(\mu_x)=\sum_{e\in E(H)}\lambda_e(a_e)_x+\log_qZ(\lambda).
\]

We have found $\mu_x$ and $\log_q(\mu_x)$, so we may now find $\mathcal H_{q}(\mu)$:
\begin{align*}
    \mathcal H_{q}(\mu)&=-\sum_{x\in\Omega}\mu_x\log_q\mu_x\\
    &=\sum_{x\in\Omega}\mu_x\left(\sum_{e\in E(H)}\lambda_e(a_e)_x+\log_qZ(\lambda)\right)\\
    &=\left(\sum_{x\in\Omega}\mu_x\sum_{e\in E(H)}\lambda_e(a_e)_x\right)+\left(\sum_{x\in\Omega}\mu_x\cdot\log_q Z(\lambda)\right)\\
    &=\sum_{e\in E(H)}\lambda_ea_e^\top\mu+\log_qZ(\lambda)\\
    &=\delta\sum_{e\in E(H)}\lambda_e+\log_qZ(\lambda) \tag{1},
    \end{align*}
since by complementary slackness we have $\lambda_e(a_e^\top\mu-\delta)=0$ for each $e\in E(H)$.

Observe for any labeling $x$ and edge $e=uv$ we can write 
\[
(q^{-\lambda_e})^{\mathbf1_{x_u\not=x_v}}=q^{-\lambda_e}+(1-q^{-\lambda_e})\mathbf 1_{x_u=x_v},
\]

thus
\begin{align*}
    Z(\lambda)&=\sum_{x\in\Omega}\prod_{e\in E(H)}(q^{-\lambda_e}+(1-q^{-\lambda_e})\mathbf 1_{x_u=x_v})\\
    &=\sum_{x\in\Omega}\sum_{F\subseteq E(H)}\left((\prod_{e\in F}(1-q^{-\lambda_e})\mathbf 1_{x_u=x_v}))(\prod_{e\not\in F} q^{-\lambda_e})\right)\text{ by distribution laws.}\\
    &=\sum_{F\subseteq E(H)}\left((\prod_{e\in F}1-q^{-\lambda_e})(\prod_{e\not\in F} q^{-\lambda_e})\right)\sum_{x\in\Omega}\sum_{F\subseteq E(H)}\prod_{e\in F}\mathbf 1_{x_u=x_v}
\end{align*}

where
\[
\sum_{x\in\Omega}\sum_{F\subseteq E(H)}\prod_{e\in F}\mathbf 1_{x_u=x_v}=\sum_{F\subseteq E(H)}\sum_{x\in\Omega}\mathbf 1_{x_u=x_v\text{ for all }uv\in F}\geq\sum_{F\subseteq E(H)}q^{v(H)-|F|-1},
\]
since there are $q$ such choices for each of the $v(H)-1$ coordinates of $x$, except that coordinates in the same component of the graph defined by the edges of $F$ must be the same, and that graph surely has at most $|F|$ components.

We thus obtain
\begin{align*}
    Z(\lambda)&\geq\sum_{F\subseteq E(H)}\left(q^{v(H)-|F|-1}(\prod_{e\in F}1-q^{-\lambda_e})(\prod_{e\not\in F} q^{-\lambda_e})\right)\\
    &=q^{v(H)-1}\prod_{e\in E(H)}(q^{-\lambda_e}+\frac{1-q^{-\lambda_e}}{q})\text{ by distribution laws}.
\end{align*}

As a result we clearly have
\[
\log_q Z(\lambda)\geq v-1-m+\sum_e \log_q\bigl(1+(q-1)q^{-\lambda_e}\bigr),
\]

and so applying (1) we have shown
\[
\Phi_{H,q}(\delta)\geq v-1-m+\sum_e g_{q,\delta}(\lambda_e)\tag{2}
\]
where
\[
g_{q,\delta}(\lambda)=\log_q\bigl(1+(q-1)q^{-\lambda}\bigr)+\delta\lambda.
\]

We now minimize this one-variable function. Its derivative is
\[
g_{q,\delta}'(\lambda)=\delta-\frac{(q-1)q^{-\lambda}}{1+(q-1)q^{-\lambda}}.
\]

The unique critical point satisfies
\[
q^{-\lambda}=\frac{\delta}{(q-1)(1-\delta)}.
\]

Now since $\delta<1-\frac1q$, the right-hand side of the above expression is less than $1$, so this critical point lies in
$\lambda>0$. Furthermore,
\[
g_{q,\delta}''(\lambda)>0,
\]
so it is the minimum. At this point $\lambda_0$,
\[
1+(q-1)q^{-\lambda_0}=\frac1{1-\delta}
\]
and
\[
\lambda_0=\log_q\left(\frac{(q-1)(1-\delta)}{\delta}\right).
\]

\noindent Therefore, 
\begin{align*}
\min_{\lambda\geq0}g_{q,\delta}(\lambda)
&=-\log_q(1-\delta)+\delta\log_q\left(\frac{(q-1)(1-\delta)}{\delta}\right)\\
&=-(1-\delta)\log_q(1-\delta)-\delta\log_q\left(\frac{\delta}{q-1}\right)\\
&=H_q(\delta).
\end{align*}

\noindent Thus, every term in (2) is at least $H_q(\delta)$, proving
\[
\Phi_{H,q}(\delta)\geq v-1-m+mH_q(\delta).
\]

\noindent This completes the proof of the lemma.
\end{proof}

We can now prove the main theorem of the section.

\begin{theorem}
\label{thm: Hamming-H-Count}
For any fixed $q$, any $0<\delta\leq1-\frac{1}{q}$ and any fixed graph $H$,
\[
\inj\left(H,\Gamma_{<\delta n}(\Fq^n)\right)\in (q^n)^{v(H)-e(H)+e(H)h_q(\delta)+o(1)+\mathbb R_{\geq0}}.
\]
\end{theorem}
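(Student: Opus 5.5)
The statement asks for a lower bound: the exponent is at least $v(H)-e(H)+e(H)h_q(\delta)+o(1)$. I will prove it by a type-class (method of types) counting argument driven by Lemma \ref{lem: entropy-inequality}. Write $v=v(H)$, $m=e(H)$, fix a root $r\in V(H)$, and let $\Omega=\{x:V(H)\to\Fq : x_r=0\}$ as in the lemma. A map $\phi:V(H)\to\Fq^n$ can be encoded by the root image $\phi(r)\in\Fq^n$ together with, for each coordinate $i\in[n]$, the labeling $x^{(i)}\in\Omega$ given by $x^{(i)}_u=\phi(u)_i-\phi(r)_i$. For an edge $uv$, the Hamming distance $d_H(\phi(u),\phi(v))$ is the number of coordinates $i$ with $x^{(i)}_u\neq x^{(i)}_v$. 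So $\phi$ is a homomorphism into $\Gamma_{<\delta n}(\Fq^n)$ exactly when, for every edge, the empirical distribution of the sequence $(x^{(i)})_{i\le n}$ assigns probability $<\delta$ to the event $x_u\neq x_v$.

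First I handle the strict inequality and the boundary case $\delta=1-1/q$. Fix $\delta'<\delta$ with $\delta'<1-1/q$, and apply Lemma \ref{lem: entropy-inequality} with $\delta'$. This gives a distribution $\mu$ on $\Omega$ with
\[
\mathcal H_q(\mu)\ge v-1-m+mH_q(\delta')
\]
and $\Pr_\mu(x_u\ne x_v)\le\delta'$ for every edge. Next I round $\mu$ to an $n$-type $\mu_n$, meaning all probabilities are multiples of $1/n$, with $\|\mu_n-\mu\|_\infty\le|\Omega|/n$. Then each edge constraint holds with $\delta'+O(1/n)<\delta$, and $\mathcal H_q(\mu_n)=\mathcal H_q(\mu)-o(1)$ by continuity of entropy. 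The standard type-class estimate says the number of sequences in $\Omega^n$ of type $\mu_n$ is at least $(n+1)^{-|\Omega|}q^{n\mathcal H_q(\mu_n)}$. Combined with the $q^n$ choices of root image, this yields
\[
\hom\bigl(H,\Gamma_{<\delta n}(\Fq^n)\bigr)\ge q^{n(v-m+mH_q(\delta')-o(1))}.
\]
Letting $\delta'\uparrow\delta$ slowly with $n$, and using continuity of $H_q$, gives exponent $v-m+mH_q(\delta)+o(1)$ (I read $h_q$ in the statement as $H_q$).

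The remaining step is passing from $\hom$ to $\inj$, and this is where I expect the main obstacle. The non-injective homomorphisms number at most
\[
\binom v2 (q^n)^{v-1}\cdot\max\text{ over quotients},
\]
but this crude count $q^{n(v-1)}$ can exceed the main term, so I cannot simply subtract it. Instead I will force injectivity within the type class. A non-injective $\phi$ has $\phi(u)=\phi(w)$ for some $u\ne w$, i.e.\ $x^{(i)}_u=x^{(i)}_w$ for all $i$, and its type then gives zero mass to $\{x_u\ne x_w\}$. The lemma's optimizer has full support, which the lemma's proof establishes, so $\mu_n$ can be chosen with positive mass on every $x$ for large $n$. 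Every sequence of type $\mu_n$ therefore has some coordinate with $x_u\ne x_w$, so every map counted is injective. The count thus lower-bounds $\inj$ directly, which finishes the argument.
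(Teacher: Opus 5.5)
Your argument is correct and is essentially the paper's proof: a method-of-types count of coordinate-pattern sequences (the matrices in $T_n$) built from the maximizer in Lemma~\ref{lem: entropy-inequality}, where full support of the type forces injectivity and translations supply the extra factor $q^n$. The differences are cosmetic: the paper gets full support with an explicit floor $\varepsilon_n/|\Omega|$ by mixing the maximizer with the uniform distribution at weight $\varepsilon_n=n^{-1/3}$ and working at $\delta_n=\delta-3\varepsilon_n$, so it needs no separate fixed-$\delta'$-then-limit step, and it bounds the multinomial by Stirling rather than by the $(n+1)^{-|\Omega|}$ type-class estimate.
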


\begin{proof}
Fix some vertex $1\in V(H)$, and let
\[
\Omega=\{a\in\Fq^{V(H)}:a_1=0\}.
\]
We define random variables over the rows of
$M_{n\times(v(H)-1)}(\Fq)$ as follows. Let
\[
\varepsilon_n=n^{-1/3}
\qquad\text{and}\qquad
\delta_n=\delta-3\varepsilon_n.
\]
For sufficiently large $n$, we have $\delta_n>0$. 

We can take a probability distribution $\mu_n$ on $\Omega$ attaining $\Phi_{H,q}(\delta_n)$ in the statement of Lemma \ref{lem: entropy-inequality}. Thus, we have, for each
edge $i\ell\in E(H)$,
\[
\Pr_{\mu_n}(a_i\neq a_\ell)\leq\delta_n,
\]
and
\[
H_q(\mu_n)\geq
v(H)-1-e(H)+e(H)h_q(\delta_n).
\]

\noindent Referring to the uniform probability distribution on $\Omega$ by $U$, we
define
\[
\rho_n=(1-\varepsilon_n)\mu_n+\varepsilon_nU.
\]
Since entropy is concave and $U$ has maximum entropy, we have
\[
H_q(\rho_n)\geq H_q(\mu_n).
\]
Furthermore, for every $a\in\Omega$,
\[
\rho_n(a)\geq\frac{\varepsilon_n}{|\Omega|}.
\]
For each edge $i\ell\in E(H)$, we also have
\begin{align*}
\Pr_{\rho_n}(a_i\neq a_\ell)
&=(1-\varepsilon_n)\Pr_{\mu_n}(a_i\neq a_\ell)
+\varepsilon_n\Pr_U(a_i\neq a_\ell)\\
&\leq(1-\varepsilon_n)\delta_n
+\varepsilon_n\left(1-\frac1q\right)\\
&\leq\delta_n+\varepsilon_n\\
&=\delta-2\varepsilon_n.
\end{align*}

\noindent We now assign each $a\in\Omega$ to an integer via $\rho_n$: for each $n$, suppose
\[
\sum_{a\in\Omega}\lfloor n\rho_n(a)\rfloor=n-f(n).
\]
Since $\sum_{a\in\Omega}n\rho_n(a)=n$, we observe that $0\leq f(n)<|\Omega|$ treating $H$ and $q$ as fixed constants. Now we define a sequence $\{n_a\}_{a\in\Omega}$ with $n_a=\lfloor n\rho_n(a)\rfloor+1$ for $f(n)$ choices of $a$, and $n_a=\lfloor n\rho_n(a)\rfloor$ otherwise. Then
\[
\sum_{a\in\Omega}n_a=n
\qquad\text{and}\qquad
\frac{n_a}{n}\in\rho_n(a)+O\left(\frac1n\right)
\]
for each $a\in\Omega$. Furthermore,
\[
n\rho_n(a)\geq\frac{n^{2/3}}{|\Omega|}
\]
for every $a\in\Omega$. Thus, for sufficiently large $n$, we have $n_a\geq1$ for every $a\in\Omega$.

\noindent  Let
\[
T_n\subseteq M_{n\times(v(H)-1)}(\Fq)
\]
be the set of matrices in which each row pattern $(a_i)_{i\in V(H)\setminus\{1\}}$ occurs exactly $n_a$ times. Consider the $v(H)-1$ column vectors $x_2,x_3,\dots,x_{v(H)}\in\Fq^n$ and set $x_1=0$.

For any edge $1i\in E(H)$, we have
\begin{align*}
\frac{wt(x_i)}{n}
&=\sum_{a:a_i\neq0}\frac{n_a}{n}\\
&\in\sum_{a:a_i\neq0}\left(\rho_n(a)+O_{H,q}\left(\frac1n\right)\right)\\
&\subseteq\Pr_{\rho_n}(a_i\neq a_1)+O_{H,q}\left(\frac1n\right)\\
&\subseteq\delta-2\varepsilon_n+O_{H,q}\left(\frac1n\right).\\
\end{align*}
For sufficiently large $n$, and for any edge $i\ell\in E(H)$ with $i,\ell\neq1$, we have
\begin{align*}
\frac{d_H(x_i,x_\ell)}{n}
&=\sum_{a:a_i\neq a_\ell}\frac{n_a}{n}\\
&\in\sum_{a:a_i\neq a_\ell}\left(\rho_n(a)+O_{H,q}\left(\frac1n\right)\right)\\
&\subseteq\Pr_{\rho_n}(a_i\neq a_\ell)+O_{H,q}\left(\frac1n\right)\\
&\subseteq\delta-2\varepsilon_n+O_{H,q}\left(\frac1n\right),\\
\end{align*}
thus $wt(x_i),d_H(x_i,x_\ell)<\delta n$ for sufficiently large $n$.

We have therefore shown that $(x_1,x_2,\dots,x_{v(H)})$ is an ordered homomorphic copy of $H$ in $\Gamma_{<\delta n}(\Fq^n)$ for sufficiently large $n$.

We now show that this copy is injective. Fix distinct vertices
$i,\ell\in V(H)$. There exists some $a\in\Omega$ such that $a_i\neq a_\ell$ since $q\geq2$. Since $n_a\geq1$, this pattern occurs as at least one row of every matrix in $T_n$. Thus $x_i$ and $x_\ell$ differ in at least one coordinate, and therefore $x_i\neq x_\ell$.

We have thus shown that
\[
(x_1,x_2,\dots,x_{v(H)})
\]
is an ordered injective homomorphic copy of $H$ in $\Gamma_{<\delta n}(\Fq^n)$.

We now compute the order of $T_n$. By the multinomial formula, we have
\[
|T_n|=\binom{n}{\{n_a\}_{a\in\Omega}}=\frac{n!}{\prod_{a\in\Omega}n_a!}.
\]

Taking the logarithm,
\begin{align*}
\log_q|T_n|
&=\log_q\frac{n!}{\prod_{a\in\Omega}n_a!}\\
&=\log_qn!-\sum_{a\in\Omega}\log_qn_a!\\
&\in n\log_qn-\sum_{a\in\Omega}n_a\log_qn_a+O_{H,q}(\log n)
\qquad\text{by Stirling's formula}\\
&=-n\sum_{a\in\Omega}\frac{n_a}{n}
\log_q\left(\frac{n_a}{n}\right)+O_{H,q}(\log n).
\end{align*}

Writing $\nu_n(a)=\frac{n_a}{n}$, we thus obtain
\[
\log_q|T_n|=nH_q(\nu_n)+O_{H,q}(\log n).
\]

Since
\[
\nu_n(a)\in\rho_n(a)+O\left(\frac1n\right)
\]
for every $a\in\Omega$, we have
\[
H_q(\nu_n)\in H_q(\rho_n)+o(1).
\]
Therefore
\begin{align*}
H_q(\nu_n)
&\in H_q(\mu_n)+o(1)+\mathbb R_{\geq0}\\
&\subseteq v(H)-1-e(H)+e(H)h_q(\delta_n)+o(1)+\mathbb R_{\geq0}\\
&=v(H)-1-e(H)+e(H)h_q(\delta)+o(1)+\mathbb R_{\geq0}.
\end{align*}
Thus
\[
|T_n|\in q^{nH_q(\nu_n)+o(n)}\subseteq q^{n(v(H)-1-e(H)+e(H)h_q(\delta)+o(1)+\mathbb R_{\geq0})}.
\]

Different matrices in $T_n$ give different ordered injective homomorphisms,
since the matrix can be recovered from the vectors
\[
x_2,x_3,\dots,x_{v(H)}.
\]

Observing that translation by any element of $\Fq^n$ preserves
adjacency, we can repeat this process for each of the $q^n$ elements of
$\Fq^n$. Therefore
\[
\inj\left(H,\Gamma_{<\delta n}(\Fq^n)\right)\geq q^n|T_n|,
\]
and so
\[
\inj\left(H,\Gamma_{<\delta n}(\Fq^n)\right)\in (q^n)^{v(H)-e(H)+e(H)h_q(\delta)+o(1)+\mathbb R_{\geq0}}.
\]
\end{proof}

\noindent We note that when $H$ is not acyclic one can achieve the stronger result 
\[
\inj\left(H,\Gamma_{<\delta n}(\Fq^n)\right)\in (q^n)^{v(H)-e(H)+e(H)h_q(\delta)+\Omega(1)}.
\] 

We have achieved that every fixed subgraph appears in $\Gamma_{<\delta n}(\Fq^n)$ at exponentially at least the random rate. This is on its own an interesting observation about $\Fq^n$ under Hamming distance. What this implies is that a bound exponentially improving the order of codes over $\Fq$ cannot only utilize global statistics of small local structures in $\Fq^n$. In other words methods similar to those applied in Theorem \ref{thm: Jiang} cannot be used to find an exponential improvement. We also note that this result may be useful indirectly in upper bounding as well, as many classes of graphs with large independence have asymptotically few $H$ for some fixed $H$.

\section{Upper Bounds}

We also want to discuss combinatorial upper bounds on the size of codes. We first define some properties of spaces and graphs:

An \textit{isometry} on metric space $M$ is a permutation $i$ on $M$ such that $d(u,v)=d(i(u),i(v))$ for all $u,v\in M$. Hence, an isometry is a distance preserving bijection on a metric space. Then $M$ is \textit{homogeneous} if for any $u,v\in M$ there exists an isometry $i$ on $M$ such that $i(u)=v$. An \textit{automorphism} on graph $G$ is a permutation $f$ on $V(G)$ such that $u\sim v$ if and only $f(u)\sim f(v)$. $G$ is \textit{vertex-transitive} if for any $u,v\in V(G)$ there exists an automorphism $f$ on $G$ such that $f(u)=v$. Furthermore $G$ is \textit{nonedge-transitive} if for any $\{u_1,v_1\},\{u_2,v_2\}\not\in E(G)$ there exists an automorphism $f$ on $G$ such that $f(u_1)=u_2$ and $f(v_1)=v_2$. In other words $G$ is nonedge-transitive if $G^c$ is edge-transitive.

\begin{remark}
\label{rmk: homo->vtx trans}
    If $M$ is a homogeneous metric space, then $\Gamma_{<d}(M)$ is vertex-transitive.
\end{remark}

\begin{proposition}
\label{prop: homo->vtx in ind}
    If $G$ is vertex-transitive, then any vertex $v\in V(G)$ lies in some independent set of order $\alpha(G)$.
\end{proposition}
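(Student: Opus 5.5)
The plan is to transport a maximum independent set onto $v$ using an automorphism. First fix a maximum independent set $I\subseteq V(G)$ with $|I|=\alpha(G)$; one exists since $G$ is finite, and it is nonempty as long as $V(G)\neq\emptyset$. Then choose any vertex $u\in I$. By vertex-transitivity of $G$ there is an automorphism $f$ of $G$ with $f(u)=v$.

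Next I would check that $f(I)$ is again an independent set of order $\alpha(G)$. Since $f$ is a permutation of $V(G)$, it is injective, so $|f(I)|=|I|=\alpha(G)$. For independence, suppose $f(x)\sim f(y)$ for some $x,y\in I$. Because $f$ is an automorphism, $u\sim w$ if and only if $f(u)\sim f(w)$, so this would force $x\sim y$, contradicting the independence of $I$. Hence $f(I)$ is independent, and it contains $f(u)=v$, which proves the proposition.

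There is no real obstacle here. The only point to state carefully is that the ``only if'' direction of the automorphism definition is the one needed: we must pull a hypothetical edge in $f(I)$ back to an edge in $I$. The trivial case of an empty graph can be excluded, since then there is no vertex $v$ to consider.
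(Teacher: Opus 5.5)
Your proof is correct and follows essentially the same route as the paper: fix a maximum independent set, choose one of its vertices, and move the set onto $v$ with an automorphism that sends that vertex to $v$. One minor terminological slip does not affect correctness. Pulling an edge $f(x)\sim f(y)$ back to $x\sim y$ uses the ``if'' direction of ``$u\sim w$ if and only if $f(u)\sim f(w)$'', not the ``only if'' direction. For finite graphs either direction would suffice anyway.
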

\begin{proof}
    Let $v\in V(G)$ and $C$ be an independent set of order $\alpha(G)$. Since $G$ is vertex-transitive, choosing any $c\in C$, there is a automorphism $\varphi$ such that $\varphi(c)=v$. Then $\varphi(C)$ is an independent set of order $\alpha(G)$: $\varphi(x)\not\sim \varphi(y)$ for any $x,y\in C$. Furthermore $|\varphi(C)|=|C|=\alpha(G)$ and $v\in \varphi(C)$.
\end{proof}

We begin with the trivial bound:

\begin{theorem}[Single-Ball Bound]
\label{thm: SB}
    For any metric space $M$,
    $$A(M,d)\leq |M|-|B_d|+1.$$
\end{theorem}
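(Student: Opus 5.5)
The natural approach is the single-ball counting argument. Fix an optimal code $C$ of distance $d$, so $|C|=A(M,d)$, and fix a codeword $c\in C$. Every $u\in B_d(c)\setminus\{c\}$ satisfies $d(u,c)<d$, so $u\notin C$. In the language of Remark \ref{rmk: ind-code} and Remark \ref{rmk: degree-ball}, the $|B_d(c)|-1$ neighbours of $c$ in $\Gamma_{<d}(M)$ lie outside the independent set $C$. Hence
\[
A(M,d)=|C|\le |M|-\bigl(|B_d(c)|-1\bigr)=|M|-|B_d(c)|+1 .
\]
Since $c\in C$ was arbitrary, this gives $A(M,d)\le |M|-\max_{c\in C}|B_d(c)|+1$.

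The remaining step, and the real obstacle, is to replace $|B_d(c)|$ by the \emph{average} ball size $|B_d|$. This needs a codeword whose ball has at least average size. In the homogeneous setting, which is the one this section is aimed at, this is immediate: an isometry carrying $v$ to $w$ maps $B_d(v)$ bijectively onto $B_d(w)$. So all balls have the same size, equal to $|B_d|$, and the displayed inequality is exactly the claimed bound. Equivalently, by Remark \ref{rmk: homo->vtx trans} the graph $\Gamma_{<d}(M)$ is vertex-transitive, and Proposition \ref{prop: homo->vtx in ind} lets us place a vertex of maximum ball size inside an optimal code. For a general vertex-transitive proximity graph I would state the proof this way.

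For an arbitrary finite metric space I do not expect the statement to hold as written, so I would check it first. Take a centre $c$ and leaves $\ell_1,\dots,\ell_m$ with $d(c,\ell_i)=1$, $d(\ell_i,\ell_j)=2$, and threshold $1.5$. Then $\Gamma_{<d}(M)$ is the star $K_{1,m}$, so $A(M,d)=m$, while
\[
|B_d|=1+\frac{2m}{m+1},\qquad |M|-|B_d|+1=m+1-\frac{2m}{m+1}<m \quad\text{for } m\ge 2 .
\]
So the proof I would write establishes the bound either under the hypothesis that $M$ is homogeneous (or that $\Gamma_{<d}(M)$ is vertex-transitive), or in the unconditional form with $|B_d|$ replaced by $\min_{v\in M}|B_d(v)|$. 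The latter follows from the same one-line argument, since $|B_d(c)|\ge\min_v|B_d(v)|$ for any codeword $c$.
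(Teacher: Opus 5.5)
Your argument is the same single-ball counting the paper uses: a codeword $v$ excludes the other $|B_d(v)|-1$ points of its ball from $C$. Your diagnosis is also correct. The statement as written, with $|B_d|$ the \emph{average} ball size, is false for general finite metric spaces.

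The paper's proof asserts ``choosing a vertex $v\in C$ with $|B_d(v)|\geq|B_d|$'' without justification, and that is exactly the step that fails. In your star example with $m\ge 2$, the unique optimal code is the set of leaves. Every leaf has $|B_d(\ell_i)|=2<1+\frac{2m}{m+1}=|B_d|$, so no such codeword exists. For $m=2$ this gives $A(M,d)=2>\frac53$. The example also works with the integer threshold $d=2$, since $d(\ell_i,\ell_j)=2$ still fails to be less than $2$.

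Your repairs are the right ones:
\begin{itemize}
\item For homogeneous $M$, or whenever $\Gamma_{<d}(M)$ is vertex-transitive (hence regular, so all balls have equal size by Remark \ref{rmk: degree-ball}), every ball has size $|B_d|$. The paper's argument then goes through verbatim. You do not even need Proposition \ref{prop: homo->vtx in ind} here, since any codeword will do.
\item For arbitrary $M$, the valid unconditional statement is $A(M,d)\le|M|-\min_{v}|B_d(v)|+1$.
\end{itemize}
The homogeneous version is all the paper actually uses afterwards: Proposition \ref{prop: SB char} and the remark about duality with the GV bound in the vertex-transitive case. So nothing downstream is affected by the correction.
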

\begin{proof}
Suppose $C$ is an optimal code of distance $d$. Then choosing a vertex $v\in C$ with $|B_d(v)|\geq|B_d|$, $C\cap B_d(v)=\{v\}$, so elements of $M$ other than $v$ cannot be in both $C$ and $B_d(v)$. Therefore, $|M|\geq |C|+|B_d(v)|-1\geq|C|+|B_d|-1$. 
\end{proof}

We include this bound not for its tightness, but for the complement-like duality it shares with the Gilbert–Varshamov bound in the vertex-transitive  case of $\Gamma_{<d}(M)$.


\begin{proposition}
\label{prop: SB char}
For a homogeneous space $M$, equality holds in the Single-Ball bound exactly when $\Gamma_{<d}(M)$ is a multipartite graph.
\end{proposition}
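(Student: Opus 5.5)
The plan is to work entirely in the proximity graph $G=\Gamma_{<d}(M)$ and read ``multipartite'' as \emph{complete} multipartite. In the weak sense every graph is multipartite, so this is the only reading under which the statement has content. Equivalently, the complement $G^c$ is a disjoint union of cliques, i.e.\ non-adjacency together with equality is an equivalence relation on $V(G)$.

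First, use homogeneity. By Remark \ref{rmk: homo->vtx trans}, $G$ is vertex-transitive. Since isometries map balls to balls, every ball has the same size, so $|B_d(v)|=|B_d|$ for all $v$. By Remark \ref{rmk: degree-ball}, $G$ is then $D$-regular with $D=|B_d|-1$. The Single-Ball bound therefore reads $\alpha(G)\leq |M|-D$, using Remark \ref{rmk: ind-code} to identify $A(M,d)$ with $\alpha(G)$.

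For the direction ``multipartite $\Rightarrow$ equality'', suppose $G$ is complete multipartite and regular. Then all parts have the same size $s$, since a vertex in a part of size $s_i$ has degree $|M|-s_i$. The independent sets of $G$ are exactly the subsets of parts, so $\alpha(G)=s=|M|-D=|M|-|B_d|+1$.

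For the direction ``equality $\Rightarrow$ multipartite'', fix any $v\in M$.
\begin{itemize}
\item By Proposition \ref{prop: homo->vtx in ind}, $v$ lies in an independent set $C_v$ of order $\alpha(G)$.
\item Retrace the proof of Theorem \ref{thm: SB} with this $v$. Because $C_v$ is independent, $C_v\cap B_d(v)=\{v\}$, which gives $|C_v\cup B_d(v)|=|C_v|+|B_d|-1$.
\item Equality $|C_v|=|M|-|B_d|+1$ then forces $C_v\cup B_d(v)=M$.
\item Hence every vertex that is not adjacent to $v$ and not equal to $v$ lies in $C_v$. In other words, the closed non-neighbourhood $\{v\}\cup\{u: u\not\sim v\}$ equals $C_v$ and is independent.
\end{itemize}
Now derive transitivity of non-adjacency. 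If $u\not\sim v$ and $w\not\sim v$ with $u\neq w$, then $u,w\in C_v$, so $u\not\sim w$. Thus the relation ``$u=v$ or $u\not\sim v$'' is reflexive, symmetric and transitive, i.e.\ an equivalence relation. Its classes are independent sets, and any two vertices in different classes are adjacent. This is precisely a complete multipartite structure.

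The main obstacle is the equality step in the last direction. The Single-Ball argument only produces the inequality for one optimal code and one convenient vertex. What is needed is that the union $C_v\cup B_d(v)$ covers $M$ for \emph{every} vertex $v$. This is exactly where vertex-transitivity enters, in two places: through Proposition \ref{prop: homo->vtx in ind}, to put each $v$ into a maximum independent set, and through regularity, so that $|B_d(v)|=|B_d|$ and no slack is lost in the inequality $|B_d(v)|\geq|B_d|$. Once this covering holds at every vertex, the transitivity argument is immediate.
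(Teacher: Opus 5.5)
Your proof is correct and follows essentially the same route as the paper. You place an arbitrary vertex $v$ in a maximum independent set using vertex-transitivity, and use equality in the Single-Ball bound to force $M=C_v\cup B_d(v)$. From that you conclude that non-adjacency is transitive, and you prove the converse by counting in a regular (equal-part) complete multipartite graph. You also make explicit the regularity $|B_d(v)|=|B_d|$ from homogeneity, which the paper's argument uses only implicitly.
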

\begin{proof}
Suppose $A(M,d)=|M|-|B_d|+1$, and let $u\not\sim v$ and $v\not\sim w$. By Proposition \ref{prop: homo->vtx in ind}, there is an independent set $C$ in $\Gamma_{<d}(M)$ containing $v$ with $|C|=n-|B_d|+1$, so $n=|C|+|B_d|-1$, therefore $M$ is partitioned by $C$ and $N(v)$ viewed in $\Gamma_{<d}(M)$. Thus, since $u,w\not\in N(v)$, it must be the case that $u,w\in C$, therefore $u\not\sim w$. Thus $\Gamma_{<d}(M)$ is multipartite. The other direction follows from counting the independence of degree-regular Tur\'an graphs.
\end{proof}

\noindent For stronger results we turn to independent set packings in graphs.

For a graph $G$, let $\mathcal{W}_k$ denote the set of independent sets of order $k$ in $G$. Then we  call a function $w:\mathcal{W}_k\to[0,1]$ a \textit{fractional packing} if for all vertices $v\in V(G)$, $\sum_{C\in\mathcal{W}_k:\,v\in C}w(C)\leq1$. We then define the \textit{fractional packing number} of $G$, $P_k^*(G)=\max\{\sum_{C\in\mathcal{W}_k}w(C):\text{$w$ is a fractional packing in $G$}\}$. We  call a function $w:\mathcal{W}_k\to[0,1]$ an \textit{edge-fractional packing} if for all nonedges $e\not\in E(G)$, $\sum_{C\in\mathcal{W}_k:\,e\subseteq C}w(C)\leq1$, and define the \textit{nonedge-fractional packing number} of $G$, $\mathcal{P}_k^*(G)=\max\{\sum_{C\in\mathcal{W}_k}w(C):\text{$w$ is an nonedge-fractional packing in $G$}\}$.

In the following two propositions we show some results about these packings:
\begin{proposition}
\label{prop: fractional}
    If $G$ is vertex-transitive, then $P_k^*(G)=\frac{n}{k}$  for any $k\leq\alpha(G)$.
\end{proposition}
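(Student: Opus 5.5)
We prove the two inequalities $P_k^*(G)\le \frac nk$ and $P_k^*(G)\ge\frac nk$ separately. Throughout, $n=|V(G)|$ and $1\le k\le\alpha(G)$, so that $\mathcal W_k$ is nonempty.

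\emph{Upper bound (double counting).} This direction does not need vertex-transitivity. Let $w$ be any fractional packing. Summing the vertex constraints over all $v\in V(G)$ gives
\[
n\;\ge\;\sum_{v}\sum_{C\ni v}w(C)\;=\;\sum_{C\in\mathcal W_k}|C|\,w(C)\;=\;k\sum_{C}w(C).
\]
Hence every fractional packing has total weight at most $n/k$, and so $P_k^*(G)\le n/k$.

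\emph{Lower bound (symmetric weighting).} Let $\mathrm{Aut}(G)$ be the automorphism group of $G$. It acts on $\mathcal W_k$, since automorphisms map independent sets of order $k$ to independent sets of order $k$, exactly as in the proof of Proposition~\ref{prop: homo->vtx in ind}.

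1. Put the uniform weight $w(C)=c$ on every $C\in\mathcal W_k$, for a constant $c>0$ to be chosen.

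2. For each vertex $v$, let $N_v=|\{C\in\mathcal W_k: v\in C\}|$. By vertex-transitivity, for any $u,v$ there is an automorphism $\varphi$ with $\varphi(u)=v$. The map $C\mapsto\varphi(C)$ is then a bijection from the sets in $\mathcal W_k$ containing $u$ to those containing $v$, so $N_u=N_v=:N$ for all vertices.

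3. Double counting again gives $nN=k|\mathcal W_k|$. Since $\mathcal W_k\neq\emptyset$, we have $N>0$.

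4. Choose $c=1/N$. Each vertex constraint then holds with equality, and $c\le 1$ because $N\ge1$, so $w$ is a fractional packing with values in $[0,1]$.

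5. Its total weight is $|\mathcal W_k|/N=n/k$, which gives $P_k^*(G)\ge n/k$.

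Combining the two directions yields $P_k^*(G)=n/k$.

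\emph{Main obstacle.} The only substantive point is the constancy of $N_v$, and vertex-transitivity gives this directly. We must also handle two small details: the hypothesis $k\le\alpha(G)$ is needed so that $\mathcal W_k$ is nonempty and $N>0$, and the choice $c=1/N$ must lie in $[0,1]$. Both are immediate, so we expect the argument to be short. We also note that ``max'' in the definition of $P_k^*$ is attained, since we exhibited a packing achieving the upper bound.
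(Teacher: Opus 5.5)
Your proposal is correct and follows the paper's proof essentially verbatim. The upper bound comes from summing the vertex constraints and double counting, and the lower bound comes from putting weight $1/r$ on every independent $k$-set, where $r$ is the common number of such sets through each vertex. You are in fact slightly more careful than the paper: it asserts that $r$ is constant without the explicit automorphism bijection, and it does not check that $r>0$ and $1/r\le 1$.
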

\begin{proof}
    First, we will upper bound $P_k^*(G)$. To this end, suppose $w$ is an arbitrary fractional packing. Then $k\sum_{C\in\mathcal{W}_k}w(C)=\sum_{v\in V(G)}\sum_{C\in\mathcal{W}_k:\,v\in C}w(C)$ by double counting: each independent set $C\in\mathcal{W}_k$ will be counted by $k$ vertices in the right summand. Thus \begin{align*}
        \sum_{C\in\mathcal{W}_k}w(C)&=\frac{1}{k}\sum_{v\in V(G)}\sum_{C\in\mathcal{W}_k:\,v\in C}w(C)\\
        &\leq\frac{1}{k}\sum_{v\in V(G)}1\text{ since $w$ is a fractional packing.}\\
        &=\frac{n}{k},\text{  therefore $P_k^*(G)\leq\frac{n}{k}$.}
    \end{align*}

    Now we  provide a construction of a fractional packing to lower bound $P_k^*(G)$:

    Since $G$ is vertex-transitive, each vertex in $V(G)$ lies in some number  $r$ of independent sets of order $k$. Count the number of times that a vertex is found in an independent set in two ways: each independent set in $\mathcal{W}_k$ will be found by $k$ vertices, and each vertex will be in $r$ such independent sets, so by double counting we obtain $|\mathcal{W}_k|\cdot k=n\cdot r$. Choose $w(C)=\frac{1}{r}$ for each $C\in\mathcal{W}_k$. Then for each $v\in V(G)$, $\sum_{C\in\mathcal{W}_k:\,v\in C}w(C)=1$, so $w$ is a fractional packing. Now $\sum_{C\in\mathcal{W}_k} w(C)=|\mathcal{W}_k|\frac{1}{r}=\frac{n}{k}$, so $P_k^*(G)=\frac{n}{k}$.
\end{proof}

\begin{proposition}
\label{prop: nonedge-fractional}
    For any graph $G$ of order $n$ with average degree $\overline d$, for any $2\leq k\leq\alpha(G)$, $$\mathcal{P}_k^*(G)\le\frac{n(n-\overline d-1)}{k(k-1)}.$$ If $G$ is vertex-transitive, then $$\mathcal{P}_k^*(G)\ge\frac{n}{k}.$$ If $G$ is nonedge-transitive, then $$\mathcal{P}_k^*(G)=\frac{n(n-\overline d-1)}{k(k-1)}.$$
\end{proposition}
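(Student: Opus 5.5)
The plan mirrors the proof of Proposition \ref{prop: fractional}, replacing vertices by nonedges. The count of nonedges will be used throughout. Counting ordered pairs of distinct non-adjacent vertices gives $\sum_v (n-1-\deg v)=n(n-1-\overline d)$. Hence the number of nonedges is $N:=\frac{n(n-\overline d-1)}{2}$.

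\emph{Upper bound.} Take an arbitrary nonedge-fractional packing $w$. Each $C\in\mathcal W_k$ is independent, so it contains exactly $\binom k2$ nonedges. Double counting pairs (nonedge $e$, set $C\ni e$) gives
\[
\binom k2\sum_{C\in\mathcal W_k}w(C)=\sum_{e\notin E(G)}\ \sum_{C\in\mathcal W_k:\,e\subseteq C}w(C)\le N .
\]
Dividing by $\binom k2$ yields $\mathcal P_k^*(G)\le \frac{2N}{k(k-1)}=\frac{n(n-\overline d-1)}{k(k-1)}$. The hypothesis $k\ge2$ is needed so that $\binom k2>0$.

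\emph{Vertex-transitive lower bound.} I would reuse the uniform weighting from Proposition \ref{prop: fractional}. Every vertex lies in exactly $r\ge1$ sets of $\mathcal W_k$, where $r\ge1$ because $k\le\alpha(G)$ and by Proposition \ref{prop: homo->vtx in ind}. Put $w\equiv 1/r$. For a nonedge $\{u,v\}$, the sets containing both $u$ and $v$ form a subset of the sets containing $u$. So the load on $\{u,v\}$ is at most $r\cdot\frac1r=1$, and $w$ is a valid nonedge-fractional packing. Its total weight is $|\mathcal W_k|/r=n/k$.

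\emph{Nonedge-transitive equality.} The natural weighting is again uniform, now normalised by nonedges rather than vertices. The key step is the claim that every nonedge lies in the same number $s$ of sets of $\mathcal W_k$. To see this, take nonedges $\{u_1,v_1\}$ and $\{u_2,v_2\}$. By nonedge-transitivity there is an automorphism $f$ with $f(u_1)=u_2$ and $f(v_1)=v_2$. Since $f$ maps independent $k$-sets to independent $k$-sets bijectively, $C\mapsto f(C)$ is a bijection from the sets containing $\{u_1,v_1\}$ to those containing $\{u_2,v_2\}$. We need $s\ge1$. Some $C\in\mathcal W_k$ exists because $k\le\alpha(G)$, and since $k\ge2$ it contains a nonedge; transitivity then gives $s\ge1$. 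Set $w\equiv 1/s$. Each nonedge then has load exactly $1$, and the double count above becomes an equality, $\binom k2|\mathcal W_k|=Ns$. The total weight is therefore $|\mathcal W_k|/s=\frac{n(n-\overline d-1)}{k(k-1)}$, which matches the upper bound.

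I expect no serious obstacle. The points needing care are the nondegeneracy conditions $r,s\ge1$, which come from $2\le k\le\alpha(G)$, and the requirement $w(C)\le1$, which holds because $r,s\ge1$. One degenerate case needs attention. If $G$ has no nonedges, then $\alpha(G)=1$, which contradicts $k\ge2$, so this case does not arise. A further subtlety is that the paper's definition of nonedge-transitivity treats unordered pairs as if ordered. I will use it exactly as stated: one automorphism sending $u_1\mapsto u_2$ and $v_1\mapsto v_2$, which is all the argument requires.
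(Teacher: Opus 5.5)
Your proposal is correct and follows essentially the same route as the paper. You double count over nonedges for the upper bound, reuse the vertex-uniform packing of Proposition~\ref{prop: fractional} in the vertex-transitive case (its load on a nonedge is dominated by the load on one endpoint), and place uniform weight on $\mathcal W_k$ in the nonedge-transitive case, while also spelling out details the paper leaves implicit: that every nonedge lies in the same number $s\ge 1$ of sets via the bijection $C\mapsto f(C)$, and that the weights satisfy $w\le 1$.
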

\begin{proof}
    Suppose $w$ is an arbitrary nonedge-fractional packing. Then \begin{align*}
        \binom{k}{2}\sum_{C\in\mathcal{W}_k}w(C)&=\sum_{e\not\in E(G):}\sum_{\substack{C\in\mathcal{W}_k\\ e\subseteq C}}w(C),\\
        &\text{ since each side counts $w(C)$ once for each nonedge in $C$.}\\
        &=\sum_{e\not\in E(G)}\sum_{C\ni e}w(C),\\
        &\leq\sum_{e\not\in E(G)}1=\binom{n}{2}-e(G).
    \end{align*}
    Thus $\sum_{C\in\mathcal{W}_k}w(C)\leq\frac{\binom{n}{2}-e(G)}{\binom{k}{2}}=\frac{n(n-\overline d-1)}{k(k-1)}$, and as $w$ was an arbitrary nonedge-fractional packing we have $\mathcal{P}_k^*(G)\le\frac{n(n-\overline d-1)}{k(k-1)}$.

    If we suppose $G$ is vertex-transitive, by proposition \ref{prop: fractional} we can take a fractional packing $w$ in $G$ with $\sum_{C\in\mathcal{W}_k}w(C)=\frac{n}{k}$. Now for any nonedge $e=\{a,b\}\not\in E(G)$, $\sum_{C\in\mathcal{W}_k:\,e\subseteq C}w(C)\leq\sum_{C\in\mathcal{W}_k:\,a\in C}w(C)\leq 1$ since $w$ is a fractional packing. Thus $w$ is also a nonedge-fractional packing and so $\mathcal{P}_k^*(G)\geq\frac{n}{k}$.

    If instead we suppose $G$ is nonedge-transitive, each nonedge in $G$ lies in some number $r$ of independent sets of order $k$. Count the number of times that a nonedge is found in an independent set in two ways: each independent set in $\mathcal{W}_k$ contains $\binom{k}{2}$ vertices, and each nonedge is in $r>0$ independent sets in $\mathcal{W}_k$, so by double counting we obtain $|\mathcal{W}_k|\cdot\binom{k}{2}=\left(\binom{n}{2}-e(G)\right)\cdot r$. Choose $w(C)=\frac{1}{r}$ for each $C\in\mathcal{W}_k$. Then for each $e\not\in E(G)$, $\sum_{C\in\mathcal{W}_k:\,e\subseteq C}w(C)=1$, so $w$ is a nonedge-fractional packing. Now $\sum_{C\in\mathcal{W}_k} w(C)=|\mathcal{W}_k|\frac{1}{r}=\frac{n(n-\overline d-1)}{k(k-1)}$, so $\mathcal{P}_k^*(G)=\frac{n(n-d-1)}{k(k-1)}$.
\end{proof}
We note that the vertex-transitive bound is optimal in that there are vertex-transitive graphs achieving $\mathcal{P}_k^*(G)=\frac{n}{k}$ even for $n-d>k$. We can now construct a bound on the independence number in the form of an upper bound on the number of triangles in the graph.

\begin{theorem}
\label{thm: K_3 UB by packing}
    For a graph $G$ of order $n$, independence $\alpha<n$, and regular degree $\Delta$, 
\[
3K_3(G)\leq n\binom{\Delta}{2}-\mathcal{P}_\alpha^*(G)\frac{\alpha\Delta(\alpha\Delta-(n-\alpha))}{2(n-\alpha)}.
\]
\end{theorem}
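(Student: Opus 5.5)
We bound the number of triangles through the \emph{cherries} (paths of length two) of $G$. Since $G$ is $\Delta$-regular, the number of pairs of neighbours of a vertex, summed over all vertices, is $n\binom{\Delta}{2}$. Each triangle contributes exactly three such pairs, one at each of its vertices, and each of these pairs is adjacent. So
\[
3K_3(G)=n\binom{\Delta}{2}-\#\{(v,\{x,y\}):x,y\in N(v),\ x\not\sim y\},
\]
that is, the total count minus the number of \emph{nonadjacent cherries} $x-v-y$ with $x\not\sim y$. It therefore suffices to prove
\[
\#\{\text{nonadjacent cherries}\}\ \geq\ \mathcal{P}_\alpha^*(G)\,\frac{\alpha\Delta(\alpha\Delta-(n-\alpha))}{2(n-\alpha)}.
\]

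\textbf{Step 1: cherries from one maximum independent set.} Fix a maximum independent set $C$ with $|C|=\alpha$. Each $c\in C$ has all $\Delta$ of its neighbours outside $C$, so the number of edges between $C$ and $V\setminus C$ is exactly $\alpha\Delta$. For $v\notin C$, let $d_C(v)=|N(v)\cap C|$. Then $\sum_{v\notin C}d_C(v)=\alpha\Delta$. Any two vertices of $N(v)\cap C$ are nonadjacent, so $v$ is the centre of at least $\binom{d_C(v)}{2}$ nonadjacent cherries whose endpoints lie in $C$. By convexity over the $n-\alpha$ vertices outside $C$,
\[
\sum_{v\notin C}\binom{d_C(v)}{2}\ \geq\ (n-\alpha)\binom{\alpha\Delta/(n-\alpha)}{2}=\frac{\alpha\Delta(\alpha\Delta-(n-\alpha))}{2(n-\alpha)}.
\]
The hypothesis $\alpha<n$ makes the division legitimate. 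Write $B$ for this quantity.

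\textbf{Step 2: combining many maximum independent sets.} Take an optimal nonedge-fractional packing $w$ on $\mathcal W_\alpha$. Summing the Step 1 bound with weights gives
\[
\sum_{C\in\mathcal W_\alpha} w(C)\,\#\{\text{nonadjacent cherries with both endpoints in }C\}\ \geq\ \mathcal P^*_\alpha(G)\,B.
\]
Now swap the order of summation. A fixed nonadjacent cherry $(v,\{x,y\})$ is counted once for each $C$ with $\{x,y\}\subseteq C$, so its total weight is $\sum_{C\supseteq\{x,y\}}w(C)$. Since $\{x,y\}$ is a nonedge, this weight is at most $1$ by the packing condition. Hence the left side is at most the number of nonadjacent cherries, which combined with the first display gives the theorem.

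\textbf{Main obstacle.} I expect the main subtlety to be exactly this swap in Step 2. It must be checked that a cherry is charged only through its endpoint pair $\{x,y\}$, and never through its centre $v$; this holds because $v\notin C$ in Step 1. It is precisely this feature that makes the nonedge packing, rather than the vertex packing, the right tool here. A secondary check is the convexity step. The bound uses $\binom{t}{2}=t(t-1)/2$ as a convex function of real $t$, and Jensen's inequality then applies even though $\alpha\Delta/(n-\alpha)$ need not be an integer. The lower bound may be negative when $\alpha\Delta<n-\alpha$, but the inequality remains valid in that case.
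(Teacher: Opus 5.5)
Your proof is correct and follows essentially the same route as the paper: you write the number of induced 2-paths (your nonadjacent cherries) as $n\binom{\Delta}{2}-3K_3(G)$, bound those with both endpoints in a maximum independent set $C$ from below by Jensen using the $\alpha\Delta$ edges leaving $C$, and aggregate over an optimal nonedge-fractional packing, where each 2-path is charged only through its nonadjacent endpoint pair. The only cosmetic difference is ordering: you apply convexity for each fixed $C$ before weighting, whereas the paper first forms the weighted sum and then applies convexity inside it.
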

\begin{proof}
     Observe that there are exactly $\sum_{x\not\in C}\binom{|N(x)\cap C|}{2}$ induced 2-paths whose endpoints lie in $C$.
    Let $w$ be an nonedge fractional packing such that $\mathcal{P}_\alpha^*(G)=\sum_{C\in\mathcal{W}_\alpha}w(C)$.
    $$ind(P_2,G)\geq\sum_{C\in\mathcal{W}_\alpha(G)}\left(w(C)\sum_{x\not\in C}\binom{|N(x)\cap C|}{2}\right),$$
    since each induced 2-path with endpoints $u,v$ and middle vertex $w$ with is counted by the sum $\sum_{x\not\in C}\binom{|N(x)\cap C|}{2}$ for precisely those $C\in\mathcal{W}_\alpha$ containing $u$ and $v$, thus is counted with saturation at most $\sum_{C\supseteq\{u,v\}}w(C)\leq1$. So \begin{align*}
        ind(P_2,G)
        &\geq \sum_{C\in\mathcal{W}_\alpha(G)}\left(w(C)\cdot(\#x\not\in C)\binom{\frac{\sum_{x\not\in C}|N(x)\cap C|}{\#x\not\in C}}{2}\right)\text{ by convexity.}\\
        &=\sum_{C\in\mathcal{W}_\alpha(G)}\left(w(C)(n-\alpha)\binom{\frac{\alpha\Delta}{n-\alpha}}{2}\right)\text{ since:}
    \end{align*}
    $\sum_{x\not\in C}|N(x)\cap C|=\sum_{x\in C}|N(x)|=\sum_{x\in C}\Delta=\alpha\Delta,$ so \begin{align*}
        ind(P_2,G)&\geq\left(\sum_{C\in\mathcal{W}_\alpha}w(C)\right)(n-\alpha)\binom{\frac{\alpha\Delta}{n-\alpha}}{2}\\
        &=\mathcal{P}_\alpha^*(G)\frac{\alpha\Delta(\alpha\Delta-(n-\alpha))}{2(n-\alpha)}.
    \end{align*}

Observe that $ind(P_2,G)=\sum_{v\in V(G)}\left(\binom{\Delta}{2}-e(G[N(v)])\right)=n\binom{\Delta}{2}-3K_3(G)$, since there are a total of $\binom{\Delta}{2}$ possible edges in $G[N(v)]$, and each edge in $G[N(v)]$ forms a $K_3$ with $v$. Furthermore each 3-clique will be chosen exactly 3 times in this way: once for each vertex of the triangle. Thus we obtain
\[
3K_3(G)\leq n\binom{\Delta}{2}-\mathcal{P}_\alpha^*(G)\frac{\alpha\Delta(\alpha\Delta-(n-\alpha))}{2(n-\alpha)}.
\]

\end{proof}

\begin{corollary}
\label{cor: UB by tri_1}
    For a graph $G$ of order $n$, independence $\alpha<n$, and regular degree $\Delta$, if $G$ is vertex-transitive, then we have:
\[
6K_3(G)\leq
n\Delta^2\frac{n-2\alpha}{n-\alpha}.
\]

\noindent If $G$ is nonedge-transitive and $\alpha(G)\geq2$, then we have:
\[
6K_3(G)\leq n\Delta\left(\Delta-1-\frac{(n-\Delta-1)\bigl(\alpha(\Delta+1)-n\bigr)}{(\alpha-1)(n-\alpha)}\right).
\]
\end{corollary}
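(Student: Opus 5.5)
The plan is to specialize Theorem~\ref{thm: K_3 UB by packing} with $k=\alpha$, using the packing estimates of Proposition~\ref{prop: nonedge-fractional}. Since $G$ is $\Delta$-regular, $\overline d=\Delta$.

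Write the bound of Theorem~\ref{thm: K_3 UB by packing} as
\[
3K_3(G)\leq n\binom{\Delta}{2}-\mathcal{P}_\alpha^*(G)\,\Phi,
\qquad
\Phi:=\frac{\alpha\Delta\bigl(\alpha\Delta-(n-\alpha)\bigr)}{2(n-\alpha)}.
\]
A lower bound on $\mathcal{P}_\alpha^*(G)$ only helps if $\Phi\geq 0$. Since $\alpha<n$, this reduces to $\alpha(\Delta+1)\geq n$, which is exactly Theorem~\ref{thm: turan} applied with $\overline d=\Delta$. So replacing $\mathcal{P}_\alpha^*(G)$ by any lower bound for it preserves the inequality. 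I expect this sign check to be the only point needing care; the rest is algebra.

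\emph{Vertex-transitive case.} Proposition~\ref{prop: nonedge-fractional} gives $\mathcal{P}_\alpha^*(G)\geq n/\alpha$. This requires $2\leq\alpha$; the case $\alpha=1$ means $G$ is complete and can be checked directly. Substituting, the subtracted term becomes $\frac{n\Delta}{2}\bigl(\frac{\alpha\Delta}{n-\alpha}-1\bigr)$. Factoring out $\frac{n\Delta}{2}$ then gives
\[
3K_3(G)\leq\frac{n\Delta}{2}\left(\Delta-1+1-\frac{\alpha\Delta}{n-\alpha}\right)=\frac{n\Delta^2}{2}\cdot\frac{n-2\alpha}{n-\alpha}.
\]
Doubling yields the first claim.

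\emph{Nonedge-transitive case.} With $\alpha\geq2$, Proposition~\ref{prop: nonedge-fractional} applies with $k=\alpha$ and gives the exact value $\mathcal{P}_\alpha^*(G)=\frac{n(n-\Delta-1)}{\alpha(\alpha-1)}$. Multiplying by $\Phi$ and cancelling one factor of $\alpha$, the subtracted term is
\[
\frac{n\Delta\,(n-\Delta-1)\bigl(\alpha(\Delta+1)-n\bigr)}{2(\alpha-1)(n-\alpha)}.
\]
Factoring $\frac{n\Delta}{2}$ out of both terms and doubling gives the stated bound.
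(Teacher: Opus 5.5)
Your proposal is correct and takes the same approach as the paper, whose proof just substitutes the values from Proposition~\ref{prop: nonedge-fractional} (with $k=\alpha$ and $\overline d=\Delta$) into Theorem~\ref{thm: K_3 UB by packing}; your algebra reproduces both stated bounds. You also make explicit two details the paper leaves implicit: the Tur\'an check that the coefficient of $\mathcal{P}_\alpha^*(G)$ is nonnegative, which is needed in the vertex-transitive case since only a lower bound is available there, and the separate treatment of $\alpha=1$, where $G=K_n$ attains equality.
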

\begin{proof}

These results follow from applying Proposition \ref{prop: nonedge-fractional} and Theorem \ref{thm: K_3 UB by packing}. 

\end{proof}

In the nonedge-transitive case, under the additional constraint that $\alpha\Delta|\Delta-\alpha|=o(n^2)$ we remark that one can write $6K_3(G)\leq(1+o(1))n^2\frac{\Delta}{\alpha}$. In particular if $6K_3(G)\approx n\Delta^2$ one has $\alpha(G)\approx\frac{n}{\Delta}$.

\section{Concluding Remarks}

In this work, we have established a unified graph-theoretic framework for studying codes over general metric spaces. By identifying codes with independent sets in the proximity graph $\Gamma_{<d}(M)$, we  generalized the Gilbert-Varshamov bound and identified the conditions for  equality in the bound.

Our investigation into independence-forcing graphs reveals a significant limitation in the search for ``better-than-GV'' codes. Specifically, the results in Section 4 demonstrate that for any fixed graph $H$, the proximity graph of the Hamming space $\Fq^n$ contains at least the expected random number of copies of $H$. This implies that local combinatorial statistics -such as $H$-subgraph counts—cannot be used to force the existence of independent sets (codes) that are exponentially larger than those guaranteed by the GV bound.

This leads to a pivotal insight for coding theory: if codes beating the GV bound exist for small prime-order alphabets, they cannot be forced by local density alone. Instead, they must arise from the global, large-scale algebraic or combinatorial structure of the subset.

We end by listing some natural questions for further investigation in this area:
\begin{enumerate}

   \item  Ramsey-Sidorenko Classification: Determine if all graphs are Ramsey-Sidorenko and identify the general form of the threshold $a_0(H)$.
    \item Structured Subsets: Investigate whether Ramsey-Sidorenko type results can be applied to structured subsets of $\Fq^n$, rather than the entire space, to find improvements.
    \item Slowly Growing Families: Explore the independence-forcing phenomena for graph families $H$ whose order grows slowly with $n$.
\end{enumerate}

\newpage


\begin{thebibliography}{99}

\bibitem{turan1941}
P.~Tur\'an,
\newblock On an extremal problem in graph theory,
\newblock \emph{Matematikai \'es Fizikai Lapok} \textbf{48} (1941), 436--452.

\bibitem{jiang2004}
T.~Jiang and A.~Vardy,
\newblock Asymptotic improvement of the Gilbert--Varshamov bound on the size of binary codes,
\newblock \emph{IEEE Transactions on Information Theory} \textbf{50} (2004).
\newblock arXiv:math/0404325v1.

\bibitem{harris2019}
D.~G. Harris,
\newblock Some results on chromatic number as a function of triangle count,
\newblock \emph{SIAM Journal on Discrete Mathematics} \textbf{33} (2019), no.~1, 546--563.
\newblock \url{https://doi.org/10.1137/17M1112238}.

\bibitem{bohman2018}
T.~Bohman and D.~Mubayi,
\newblock Independence number of graphs with a prescribed number of cliques,
\newblock arXiv preprint arXiv:1801.01091, 2018.

\bibitem{mubayi2015trees}
D.~Mubayi and J.~Verstra\"ete,
\newblock The number of trees in a graph,
\newblock arXiv preprint arXiv:1511.07274, 2015.

\bibitem{gilbert1952}
E.~N. Gilbert,
\newblock A comparison of signalling alphabets,
\newblock \emph{Bell System Technical Journal} \textbf{31} (1952), no.~3, 504--522.
\newblock \url{https://doi.org/10.1002/j.1538-7305.1952.tb01393.x}.

\bibitem{varshamov1957}
R.~R. Varshamov,
\newblock Otsenka chisla signalov v kodakh s korrektsiei oshibok,
\newblock \emph{Doklady Akademii Nauk SSSR} \textbf{117} (1957), no.~5, 739--741.

\bibitem{conlon2018}
D.~Conlon, J.~H. Kim, C.~Lee, and J.~Lee,
\newblock Some advances on Sidorenko's conjecture,
\newblock \emph{Journal of the London Mathematical Society} \textbf{98} (2018), no.~3, 593--608.
\newblock \url{https://doi.org/10.1112/jlms.12142}.

\bibitem{sidorenko1992}
A.~F. Sidorenko,
\newblock Inequalities for functionals generated by bipartite graphs,
\newblock \emph{Discrete Mathematics and Applications} \textbf{2} (1992), no.~5, 489--504.
\newblock \url{https://doi.org/10.1515/dma.1992.2.5.489}.

\bibitem{farina2025kkt}
G.~Farina,
\newblock Lecture 7: Lagrange multipliers and KKT conditions,
\newblock MIT OpenCourseWare, 6.7220J/15.084 Nonlinear Optimization, 2025.
\newblock Lecture notes, Theorem L7.4.
\newblock \url{https://ocw.mit.edu/courses/6-7220j-nonlinear-optimization-spring-2025/resources/mit6_7220_s25_lec07_pdf/}.

\bibitem{codetables}
M.~Grassl,
\newblock Bounds on the minimum distance of linear codes,
\newblock \url{https://codetables.de}, 2026.
\newblock Accessed July 28, 2026.

\bibitem{tvz1982}
M.~A. Tsfasman, S.~G. Vl\u{a}du\c{t}, and Th.~Zink,
\newblock Modular curves, Shimura curves, and Goppa codes, better than the Varshamov--Gilbert bound,
\newblock \emph{Mathematische Nachrichten} \textbf{109} (1982), no.~1, 21--28.
\newblock \url{https://doi.org/10.1002/mana.19821090103}.

\end{thebibliography}
\end{document}